\theoremstyle{plain}
 \newtheorem{theorem}{Theorem}
 \newtheorem{corollary}{Corollary}
  \newtheorem{remark}{Remark}
\newtheorem{lemma}{Lemma}
\DeclareMathOperator{\res}{Res}
\DeclareMathOperator{\re}{Re}
\begin{document}

\title[Mean number of divisors for rough, dense, practical numbers]
{The mean number of divisors for rough, dense and practical numbers}

\begin{abstract}
We give asymptotic estimates for the mean number of divisors of integers without small prime factors, integers
with bounded ratios of consecutive divisors, and for practical numbers. In the last case, this confirms a conjecture of 
Margenstern.
\end{abstract}

\author{Andreas Weingartner}
\address{ 
Department of Mathematics,
351 West University Boulevard,
 Southern Utah University,
Cedar City, Utah 84720, USA}
\email{weingartner@suu.edu}
\date{May 10, 2021}
\subjclass[2010]{11N25, 11N37}
\dedicatory{Dedicated to Maurice Margenstern for his insightful conjectures on practical numbers}
\maketitle

\section{Introduction}

Let $\tau(n)$ be the number of positive divisors of $n$. 
We explore the average size of $\tau(n)$ as $n$ runs through one of several integer sequences.
First, we look at the $y$-rough numbers, i.e. integers with no prime factors $\le y$.
Let $P^-(n)$ denote the smallest prime factor of $n\ge 2$ and put $P^-(1)=\infty$. 
Define the function $\xi(u)$ by $\xi(u)=0$ for $u<1$ and 
\begin{equation}\label{xidef}
u \xi(u) = 2+2 \int_{1}^{u-1} \xi(t)\,  dt \qquad (u\ge 1).
\end{equation}

\begin{theorem}\label{thm1}
Uniformly for $x\ge 1$, $y\ge 2$, $u=\log x /\log y$,
\begin{multline*}
S(x,y):=\sum_{n \le x \atop P^-(n)>y} \tau(n) =1+ x (\log x) \prod_{p\le y}\left(1-\frac{1}{p}\right)^2 \\
+ \frac{x}{\log y}\left\{\xi(u) - ue^{-2\gamma}-\left.\frac{2y}{x}\right|_{x\ge y}
+O\left(\frac{1}{\log y}\right)\right\}
\end{multline*}
and
$$
\xi(u)= (u+ 2)e^{-2\gamma} + O\left(u^{-u}\right) \qquad (u\ge 1).
$$
\end{theorem}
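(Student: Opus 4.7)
The plan is to handle the two assertions in the theorem separately, in the order given.

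\textbf{Part 1 (Asymptotic for $S(x,y)$).} Use $\tau(n) = \sum_{ab=n} 1$ to rewrite
$$S(x,y) = \#\bigl\{(a,b)\in\Natural^2 : ab\le x,\ P^-(a)>y,\ P^-(b)>y\bigr\}.$$
Dirichlet's hyperbola method then gives
$$S(x,y) = 2\sum_{\substack{d\le\sqrt{x}\\ P^-(d)>y}} \Phi(x/d,y) - \Phi(\sqrt{x},y)^2,$$
where $\Phi(z,y) = \#\{n\le z : P^-(n)>y\}$. I would substitute a sharp uniform asymptotic for $\Phi$ (of the shape $(z\omega(\log z/\log y) - y)/\log y + O(z/\log^2 y)$ for $z\ge y$, with $\omega$ the Buchstab function; $\Phi(z,y)=\lfloor z\rfloor$ for $z<y$), and evaluate the sum over $d$ by partial summation, using the same estimate a second time to access the density of $y$-rough integers. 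Mertens' theorem extracts the main term $x(\log x)\prod_{p\le y}(1-1/p)^2$. The remaining double integral of $\omega$, viewed through the delay integral equation it satisfies, is precisely $(x/\log y)\xi(u)$. The correction $-ue^{-2\gamma}$ comes from the mismatch between the true Mertens product and its asymptotic $e^{-2\gamma}/\log^2 y$, the constant $1$ from the $(a,b)=(1,1)$ contribution, and the $-2y/x$ from the $\Phi(\sqrt x,y)^2$ boundary term when $x\ge y$.

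\textbf{Part 2 (Asymptotic for $\xi(u)$).} Observe by direct computation that $\eta(u):=(u+2)e^{-2\gamma}$ satisfies the same delay-differential equation as $\xi$:
$$\bigl(u\eta(u)\bigr)' = \bigl((u^2+2u)e^{-2\gamma}\bigr)' = (2u+2)e^{-2\gamma} = 2\eta(u-1).$$
Hence $g:=\xi-\eta$ solves the homogeneous equation $(ug(u))' = 2g(u-1)$, with explicit initial data on $[0,2]$. To quantify the decay, take the Laplace transform $F(s):=\int_0^\infty\xi(u)e^{-su}\,du$; a short calculation (applying Fubini to the integral term) converts the equation to the first-order ODE
$$-F'(s) = \frac{2e^{-s}}{s}\bigl(1+F(s)\bigr),\qquad F(+\infty)=0,$$
whose unique solution is
$$F(s) = \exp\bigl(2E_1(s)\bigr) - 1,\qquad E_1(s) = \int_s^\infty \frac{e^{-t}}{t}\,dt.$$
Expanding about $s=0$, using $E_1(s) = -\gamma-\log s + s + O(s^2)$, yields $F(s) = e^{-2\gamma}/s^2 + 2e^{-2\gamma}/s + O(1)$, which matches the Laplace transform of $(u+2)e^{-2\gamma}$ and so reproduces the main term. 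The $O(u^{-u})$ error is then obtained by a standard contour-shift / saddle-point analysis of the inverse Laplace transform of $F(s)$ minus its polar part, in direct analogy with the classical proof of super-exponential decay for the Dickman function.

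\textbf{Main obstacle.} The bulk of the work is in Part 1: obtaining the hyperbola-method sum with error terms sharp and uniform enough across the full range $x\ge 1$, $y\ge 2$ that the $\xi(u)$ contribution is isolated rather than absorbed into the error. Careful bookkeeping of the small-$u$ regime, the transition at $x=y$, and the precise coefficient $-ue^{-2\gamma}$ (all of which require the exact Mertens asymptotic to second order) will consume most of the technical effort, whereas Part 2 is essentially the standard Laplace-transform treatment of a Buchstab-type delay equation.
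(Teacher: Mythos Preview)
Your approach is sound and closely parallels the paper's, with one structural difference in Part~1: the paper does \emph{not} use the hyperbola method but simply writes $S(x,y)=\sum_{d\le x,\,P^-(d)>y}\Phi(x/d,y)$ over the full range and inserts the $\Phi$-asymptotic (Lemma~\ref{PhiLemma}) term by term, estimating $\sum_d 1/d$ by a separate partial-summation lemma (Lemma~\ref{Rlem}). This is marginally cleaner because there is no $\Phi(\sqrt{x},y)^2$ to track. Your hyperbola variant works equally well, but two details in your bookkeeping are off. First, $\Phi(z,y)=1$ (not $\lfloor z\rfloor$) for $1\le z<y$, since only $n=1$ is $y$-rough in that range. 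Second, the term $-2y/\log y$ cannot come from $\Phi(\sqrt{x},y)^2$: that quantity equals $1$ throughout $y\le x<y^2$, yet the $-2y$ term is already present there. In your decomposition it comes instead from the $d=1$ summand $2\Phi(x,y)$, where the $-y/\log y$ in the $\Phi$-estimate acquires the hyperbola factor of $2$; the square $\Phi(\sqrt{x},y)^2$ contributes only at the $O(x/\log^2 y)$ level. For Part~2 your Laplace computation is exactly the paper's: your identity $\hat\xi(s)=\exp(2E_1(s))-1$ is equation~\eqref{xilap} rewritten via $E_1(s)=-\gamma-\log s+\int_0^s(1-e^{-t})t^{-1}\,dt$. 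The paper additionally records the convolution consequence $\xi=2\omega+\omega*\omega$, and for the $O(u^{-u})$ error cites Hildebrand--Tenenbaum~\cite{DDE} rather than carrying out the saddle-point analysis directly.
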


With its three main terms of magnitude $x u /\log y$, one of which is negative, Theorem \ref{thm1} is somewhat difficult to 
make sense of. However, this form is preferable when  we apply it to prove Theorems \ref{thm2} and \ref{thmgen}. 
The estimate for $\xi(u)$ and Mertens' formula allow us to simplify Theorem \ref{thm1} as follows.

\begin{corollary}\label{cor0}
We have
$$
S(x,y)=  x \log (x y^2)\prod_{p\le y}\left(1-\frac{1}{p}\right)^2
\left\{1+O\left(\frac{1}{\log x} +u^{-u}\right) \right\} \quad (x\ge y\ge 2), 
$$
$$
S(x,y)=   \frac{x\xi(u)-2y}{\log y}\left\{1+O\left(\frac{1}{\log x}+e^{-\sqrt{\log y}}\right) \right\} 
  \quad (x\ge 2y\ge 4).
$$
\end{corollary}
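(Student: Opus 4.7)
The plan is to derive both simplifications directly from Theorem~\ref{thm1} by substituting the stated expansion $\xi(u) = (u+2)e^{-2\gamma} + O(u^{-u})$ and applying Mertens' formula at two different precisions.

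For the first identity I would use the classical Mertens estimate $\prod_{p \le y}(1-1/p)^2 = e^{-2\gamma}/(\log y)^2 \cdot \{1 + O(1/\log y)\}$, which turns $2xe^{-2\gamma}/\log y$ into $2x(\log y)\prod_{p \le y}(1-1/p)^2 + O(x/(\log y)^2)$. Writing $\xi(u) - ue^{-2\gamma} = 2e^{-2\gamma} + O(u^{-u})$ in Theorem~\ref{thm1}, the surviving positive main terms combine as
\[
x(\log x) \prod_{p \le y}(1-1/p)^2 + 2x(\log y)\prod_{p \le y}(1-1/p)^2 = x\log(xy^2) \prod_{p \le y}(1-1/p)^2,
\]
which has size $\asymp x(u+2)/\log y$. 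The leftover additive errors $O(x/(\log y)^2)$, $O(xu^{-u}/\log y)$, the absolute $-2y/\log y$, and the initial $1$ must each be bounded by a constant times this main term times $(1/\log x + u^{-u})$. The first two absorb at once; the $2y/\log y$ term has relative size $\asymp y/(x(u+2))$ and is handled by case-splitting on $u$: for $u$ bounded, $u^{-u}$ is bounded below, while for $u$ large, $y/x = y^{1-u}$ decays faster than $1/\log x$.

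For the second identity I would instead use the stronger Mertens estimate $\prod_{p \le y}(1-1/p) = e^{-\gamma}/\log y \cdot \{1 + O(e^{-\sqrt{\log y}})\}$, a consequence of the prime number theorem with de la Vall\'ee Poussin error term. This gives $x(\log x)\prod_{p \le y}(1-1/p)^2 = xue^{-2\gamma}/\log y + O(xue^{-\sqrt{\log y}}/\log y)$. Substituting into Theorem~\ref{thm1} \emph{without} invoking the $\xi$-expansion, the explicit $xue^{-2\gamma}/\log y$ cancels exactly against the $-xue^{-2\gamma}/\log y$ inside the braces, producing
\[
S(x,y) = \frac{x\xi(u) - 2y}{\log y} + O\!\left(\frac{xue^{-\sqrt{\log y}}}{\log y} + \frac{x}{(\log y)^2}\right).
\]
Dividing by the main term yields the claimed multiplicative factor $\{1 + O(1/\log x + e^{-\sqrt{\log y}})\}$; here one uses $\xi(u) \sim ue^{-2\gamma}$ for large $u$ together with the explicit formula $\xi(u) = 2/u$ on $[1,2]$ (read off directly from~\eqref{xidef}, since $\xi(t)=0$ for $t<1$) and the hypothesis $x \ge 2y$ to keep the main term from degenerating.

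The main technical point in both parts is precisely this conversion of additive to multiplicative error bounds in regions where the main term can become small --- the small-$u$ regime in the first identity and the $u \approx 2$, $x \approx 2y$ regime in the second. A short case analysis on $u$, exploiting the explicit form of $\xi$ on $[1,2]$ and its leading-order behaviour for large $u$, resolves both cases.
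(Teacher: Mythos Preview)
Your proposal is correct and follows exactly the route the paper intends: the paper's entire justification for Corollary~\ref{cor0} is the sentence ``The estimate for $\xi(u)$ and Mertens' formula allow us to simplify Theorem~\ref{thm1} as follows,'' and your plan unpacks precisely that, using the weak Mertens bound for the first display and the prime-number-theorem refinement for the second. Your remark that the delicate step is converting additive to multiplicative errors near the boundary (small $u$ in the first formula, $u$ near $2$ with small $y$ in the second) is accurate; the paper leaves this implicit, and your case split together with the explicit value $\xi(u)=2/u$ on $[1,2)$ and the crude bound $\xi(u)\ge (u+2)/4$ from \eqref{xiexbounds} is the natural way to handle it.
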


Dividing these estimates by that of  Lemma \ref{PhiLemma} for $\Phi(x,y)$, the number of $y$-rough integers $n\le x$,
we obtain the following results for the mean value of $\tau(n)$.
Throughout, $\omega(u)$ denotes Buchstab's function.
\begin{corollary}\label{cor1}
We have
$$
\frac{S(x,y)}{\Phi(x,y)} = \log (x y^2)\prod_{p\le y}\left(1-\frac{1}{p}\right)\left\{1+O\left(\frac{1}{\log x} +u^{-u}\right) \right\} \quad (x\ge y\ge 2), 
$$
\begin{equation*}\label{cor1main}
\frac{S(x,y)}{\Phi(x,y)} =   \frac{\xi(u)}{\omega(u)}\left\{1+O\left(\frac{1}{\log x}+e^{-\sqrt{\log y}}\right) \right\} 
  \quad (x\ge 2y\ge 4).
\end{equation*}
 \end{corollary}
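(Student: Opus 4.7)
The strategy announced in the text preceding the statement is to divide each of the two asymptotic formulas for $S(x,y)$ in Corollary~\ref{cor0} by the corresponding asymptotic for $\Phi(x,y)$ furnished by Lemma~\ref{PhiLemma}. I expect Lemma~\ref{PhiLemma} to provide the two companion estimates
$$\Phi(x,y) = x\prod_{p\le y}\left(1-\frac{1}{p}\right)\bigl\{1+O(1/\log x)\bigr\} \qquad (x\ge y\ge 2)$$
and
$$\Phi(x,y) = \frac{x\omega(u)-y}{\log y}\bigl\{1+O(e^{-\sqrt{\log y}})\bigr\} \qquad (x\ge 2y\ge 4).$$

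The first bound of the corollary then follows immediately from the first estimate of Corollary~\ref{cor0} divided by the Mertens form: one power of $\prod_{p\le y}(1-1/p)$ cancels and the error terms combine additively.

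For the second bound, division yields
$$\frac{S(x,y)}{\Phi(x,y)} = \frac{x\xi(u)-2y}{x\omega(u)-y}\bigl\{1+O(1/\log x+e^{-\sqrt{\log y}})\bigr\},$$
so it remains to identify the leading ratio with $\xi(u)/\omega(u)$. A direct computation gives
$$\frac{x\xi(u)-2y}{x\omega(u)-y} = \frac{\xi(u)}{\omega(u)}\left(1+\frac{y(\xi(u)-2\omega(u))}{\xi(u)(x\omega(u)-y)}\right).$$
The key observation is that on $[1,2]$ the recursion~\eqref{xidef} gives $\xi(u)=2/u$ (the integral in \eqref{xidef} is over an empty interval, since $\xi$ vanishes below~$1$), while Buchstab's function satisfies $\omega(u)=1/u$; hence $\xi(u)-2\omega(u)\equiv 0$ on $[1,2]$ and the correction vanishes identically. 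For $u>2$, both $\xi(u)$ and $\omega(u)$ are bounded between positive constants, while $y/x=e^{-(u-1)\log y}\le 1/y$; the correction is therefore $O(1/y)$, absorbed by $e^{-\sqrt{\log y}}$ once $y$ is past a fixed threshold and harmless for bounded $y$ (where $e^{-\sqrt{\log y}}$ is itself bounded below).

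The main point of interest is thus not a deep estimate but the fortunate algebraic fact that $\xi$ and $\omega$ are proportional precisely on $[1,2]$---the parameter range where $y/x$ need not be small---so no loss of accuracy is incurred in passing from $(x\xi(u)-2y)/(x\omega(u)-y)$ to $\xi(u)/\omega(u)$.
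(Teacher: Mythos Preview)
Your approach is exactly the one the paper sketches: divide the two estimates of Corollary~\ref{cor0} by the corresponding simplifications of Lemma~\ref{PhiLemma}. Your handling of the ratio $(x\xi(u)-2y)/(x\omega(u)-y)$ via the identity $\xi(u)=2\omega(u)$ on $[1,2]$ is the right way to see why no accuracy is lost there, and the paper does not spell this out.

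Two small corrections to your intermediate $\Phi$-formulas, neither of which damages the final result. First, the claim $\Phi(x,y)=x\prod_{p\le y}(1-1/p)\{1+O(1/\log x)\}$ is false near $u=1$: take $x=y$ large, where $\Phi=1$ but the main term is $\asymp y/\log y$. The term $\omega(u)-e^{-\gamma}$ in Lemma~\ref{PhiLemma} is $\Theta(1)$ there, so the correct relative error is $O(1/\log x+u^{-u})$. Second, in your $\Phi(x,y)=(x\omega(u)-y)/\log y\cdot\{1+O(e^{-\sqrt{\log y}})\}$, the error should read $O(1/\log x+e^{-\sqrt{\log y}})$; the term $e^{-u/3}/\log y$ from Lemma~\ref{PhiLemma} is $O(1/\log x)$ (since $ue^{-u/3}$ is bounded) but is \emph{not} $O(e^{-\sqrt{\log y}})$ when $u$ is bounded and $y$ is large. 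In both cases the missing piece is already present in the target error of Corollary~\ref{cor1}, so your division still yields the stated conclusion.
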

 
 Corollary \ref{cor1} shows that, for fixed $u$, the average of $\tau(n)$ over $x^{1/u}$-rough integers $n\le x$
 is asymptotic to $\xi(u)/\omega(u)$, as $x\to \infty$. 
 The graphs of $\xi(u)$ and $\xi(u)/\omega(u)$ are shown in Figure \ref{figure1}.
 
 \begin{figure}[h]
\begin{center}
\includegraphics[height=6cm,width=10cm]{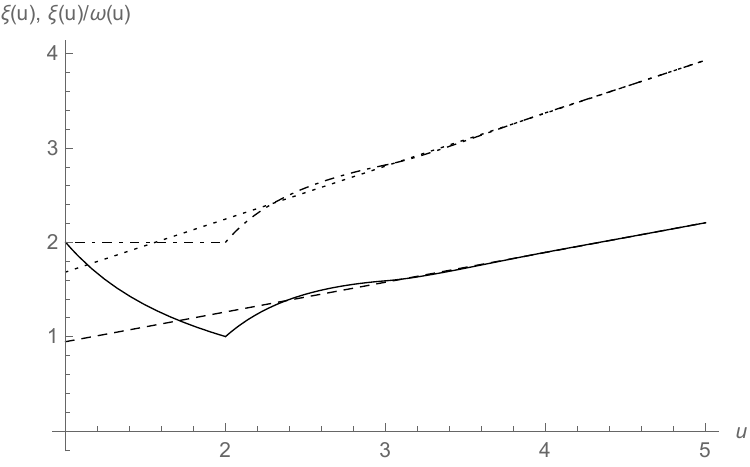}
\caption{A graph of the function $\xi(u)$ (solid) and its asymptote  $(u+2)e^{-2\gamma}$ (dashed),
and the ratio $\xi(u)/\omega(u)$ (dot-dashed) and its asymptote $(u+2)e^{-\gamma}$ (dotted).}
\label{figure1}
\end{center}
\end{figure}

We will see in Section \ref{SecProofThm1} that Theorem \ref{thm1} follows without much difficulty from 
Lemma \ref{PhiLemma}.
Our main motivation for including Theorem \ref{thm1} is its role in the proofs of the following results. 

Next, we consider integers with bounded ratios of consecutive divisors. 
We say that $n$ is $t$-dense (or $t$-densely divisible) if the ratios of consecutive divisors of $n$ do not exceed $t$. 
Let $\mathcal{D}(x,t)$ denote the set of $t$-dense integers $n\le x$ and write $D(x,t)= |\mathcal{D}(x,t)|$.
Let $\lambda(v)$ be given by $\lambda(v)=0$ for $v<0$ and 
\begin{equation}\label{lambdadef}
\lambda(v)=v-\int_0^{(v-1)/2} \lambda(u)\, \xi\left(\frac{v-u}{u+1}\right)
\frac{du}{u+1} \quad (v\ge 0).
\end{equation}

\begin{theorem}\label{thm2}
Uniformly for $x\ge 1$, $t\ge 2$, $v=\log x /\log t$,
$$
T(x,t):=\sum_{n \in \mathcal{D}(x,t) } \tau(n) = x \alpha_t (\log t) \lambda(v)  + O(x),
$$
where $0<\alpha_0 \le \alpha_t = 1+O(1/\log t)$ and 
\begin{equation}\label{lamapp}
\lambda(v) = \lambda_0 (v+1)^\delta + \lambda_1 (v+1)^{-1} + O( (v+1)^{-1.962}),
\end{equation}
$$
\delta=0.7136125..., \quad \lambda_0=1.118192..., \quad \lambda_1
=\frac{2}{3e^{-2\gamma}-2}
=-1.897014...
$$
The constants $\delta$ and $\lambda_0$ are defined in Lemma \ref{LambdaLem}.
\end{theorem}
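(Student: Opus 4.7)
The plan is to derive an integral equation for $T(x,t)$ by exploiting a unique factorization of integers relative to $t$-density. Every positive integer $n$ can be written uniquely as $n=mk$ with $m$ a $t$-dense divisor of $n$, $\gcd(m,k)=1$, and either $k=1$ or $P^{-}(k)>tm$; here $m$ is the largest $t$-dense divisor of $n$ coprime to its complement. Then $n$ is $t$-dense precisely when $k=1$, and $\tau(n)=\tau(m)\tau(k)$. This gives the identity
$$
\sum_{n\le x}\tau(n) \;=\; T(x,t) \;+\; \sum_{\substack{m \in \mathcal{D}(x,t)\\ tm^2 < x}} \tau(m)\bigl(S(x/m,\,tm)-1\bigr),
$$
where the range $m<\sqrt{x/t}$, equivalently $u:=\log m/\log t < (v-1)/2$, comes from combining $k>tm$ with $mk\le x$. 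This restriction coincides exactly with the upper limit in \eqref{lambdadef}, which is a strong hint that the strategy is correct.

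Next I would apply Corollary \ref{cor0} to each $S(x/m,tm)$: with $y=x/m$ and $w=tm$ the relevant ratio is $\log y/\log w=(v-u)/(u+1)$, so the value of $\xi$ that appears is $\xi((v-u)/(u+1))$, and the factor $1/\log w=1/((u+1)\log t)$ emerges naturally. Combining this with $\sum_{n\le x}\tau(n)=x\log x+(2\gamma-1)x+O(\sqrt{x})$ yields
$$
T(x,t) \;=\; xv\log t \;-\; \sum_{\substack{m\in\mathcal{D}(x,t)\\ u<(v-1)/2}}\frac{\tau(m)}{m}\cdot\frac{x\,\xi\bigl((v-u)/(u+1)\bigr)}{(u+1)\log t} \;+\; O(x),
$$
in which the secondary contributions (the ``$-2y$'' correction in Corollary \ref{cor0}, the Mertens-type products, and the $(2\gamma-1)x$) are absorbed either into $O(x)$ or into the constant $\alpha_t=1+O(1/\log t)$. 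Abel summation, combined with the inductive hypothesis $T(y,t)=\alpha_t(\log t)\,y\,\lambda(\log y/\log t)+O(y)$ for $y<x$, converts the $m$-sum into $\alpha_t\int_{0}^{(v-1)/2}\lambda(u)\,\xi((v-u)/(u+1))\,du/(u+1)$; comparing with \eqref{lambdadef} then gives $T(x,t)=x\alpha_t(\log t)\lambda(v)+O(x)$.

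The main obstacle is the self-referential nature of the argument: the desired asymptotic for $T$ is being used at smaller scales in order to evaluate the sum over $m$ at the larger scale. I would handle this by a bootstrap, starting from a crude a priori bound such as $T(y,t)\ll y\log y$ and iterating until the error becomes $O(y)$ uniformly in $t\ge 2$; the delicate point is controlling the error terms from Theorem \ref{thm1} in the regime where $tm$ is not much smaller than $y$, since the $e^{-\sqrt{\log y}}$ and $u^{-u}$ factors must dominate the corresponding portion of the $m$-sum. The positivity $0<\alpha_{0}\le\alpha_{t}$ uniformly in $t$ is verified by specializing to small $v$, where $\lambda(v)=v$ forces $\alpha_t$ to equal a ratio of Mertens-type products that is bounded away from $0$. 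Finally, the explicit expansion \eqref{lamapp} is a purely analytic consequence of \eqref{lambdadef} via a Mellin/Laplace analysis of the delay equation and is deferred to Lemma \ref{LambdaLem}; the exponent $\delta=0.7136\ldots$ is the dominant root of the characteristic equation that arises after linearization.
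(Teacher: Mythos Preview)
Your opening factorisation $n=mk$ with $m$ $t$-dense and $P^-(k)>tm$ is exactly Lemma~\ref{funceq} with $\theta(n)=tn$, and inserting Theorem~\ref{thm1} (or Corollary~\ref{cor0}) into $S(x/m,tm)$ is precisely what the paper does.  The penultimate ``$-2y$'' term is indeed $O(x)$, and the convergence of the series $L$ in Theorem~\ref{thmL} (together with $L=1$ from Corollary~\ref{L1cor}) is what makes the remaining main terms collapse.  Up to this point your outline and the paper coincide.

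The real gap is in the bootstrap.  Writing $G_t(v)=T(t^v,t)/t^v$, the identity you derive is
\[
G_t(v)=v\log t + E_t(v) - \int_0^{(v-1)/2} G_t(u)\,\xi\!\left(\frac{v-u}{u+1}\right)\frac{du}{u+1},\qquad E_t(v)=O(1).
\]
You propose to assume $G_t(u)=\alpha_t(\log t)\lambda(u)+O(1)$ for $u\le (v-1)/2$ and deduce it at $v$.  But the induction does not close: substituting $w=(v-u)/(u+1)$ shows the integral operator applied to the constant function $1$ equals $\int_1^v \xi(w)(w+1)^{-1}\,dw\asymp v$, so an $O(1)$ hypothesis on $[0,(v-1)/2]$ yields only $O(v)$ at $v$.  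Equivalently, the resolvent $(1+\hat h)^{-1}$ has a pole at $s=\delta>0$, so a bounded forcing $E_t$ produces a response of order $(v+1)^\delta$, not $O(1)$.  This growing piece happens to be a constant multiple of $\lambda(v)$ and must be \emph{absorbed into $\alpha_t$}; it cannot be iterated away.  The paper achieves this by the change of variable $v+1=e^z$, which turns the kernel into a genuine convolution $h(z-u)$, takes Laplace transforms, and then shows
\[
\int_0^z \Lambda'(z-u)E_t(u)\,du=\delta\,\hat E_t(\delta)\,\Lambda(z)+O(1),
\]
and similarly for $\Lambda''$.  This is how the correction $\delta(\delta-1)\hat E_t(\delta)/\log t$ enters $\alpha_t$ and how the residual becomes $O(1)$.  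Without this step (or something equivalent) your argument can reach at best $T(x,t)=x(\log t)\lambda(v)\bigl(1+O(v^{-\delta})\bigr)$, not the claimed error $O(x)$.

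Your argument for $\alpha_t\ge\alpha_0>0$ from small $v$ also does not work: for bounded $t$ and bounded $v$ the main term $\alpha_t(\log t)\lambda(v)$ and the error $O(1)$ are of the same order, so $\alpha_t$ cannot be read off.  The paper instead appeals to the independent lower bound $T(x,t)\gg x(\log x)^{0.58}$ of Corollary~\ref{L1cor}.
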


 \begin{figure}[h]
\begin{center}
\includegraphics[height=6cm,width=10cm]{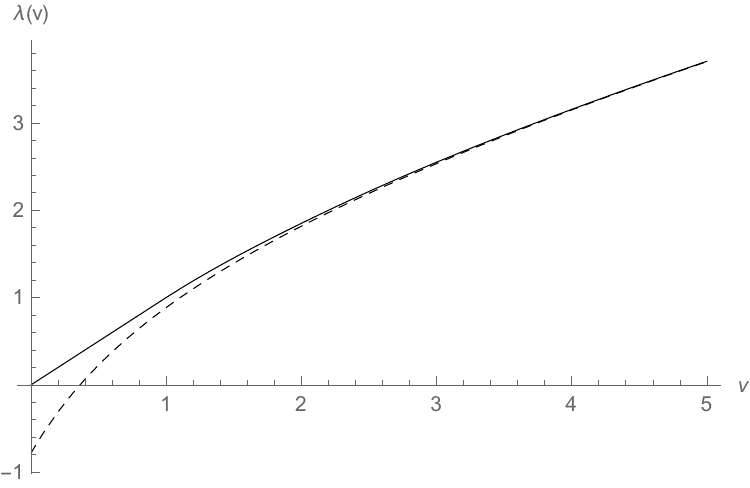}
\caption{The function $\lambda(v)$ (solid) and its approximation $ \lambda_0 (v+1)^\delta + \lambda_1 (v+1)^{-1}$ (dashed).}
\label{figure2}
\end{center}
\end{figure}

If one is only concerned with the order of magnitude of $T(x,t)$, Theorem \ref{thm2} and the estimate $D(x,t)\asymp x/v$ 
of Saias \cite[Thm. 1]{Saias1} imply the following simple formulas.
\begin{corollary}\label{ordmag}
Uniformly for $x\ge t\ge 2$, $v=\log x /\log t$,
$$
T(x,t) \asymp x v^\delta \log t, \qquad
\frac{T(x,t)}{D(x,t)}\asymp v^\delta \log x.
$$
\end{corollary}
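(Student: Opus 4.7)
The plan is to derive both estimates directly from Theorem~\ref{thm2} using the approximation \eqref{lamapp} and Saias' bound $D(x,t)\asymp x/v$.

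The key preliminary observation is that $\lambda(v)\asymp v^\delta$ uniformly for $v\ge 1$. For $v$ beyond some threshold $v_0$, the approximation \eqref{lamapp} yields
\[
\lambda(v)=\lambda_0(v+1)^\delta\bigl(1+O\bigl((v+1)^{-\delta-1}\bigr)\bigr),
\]
and since $\lambda_0>0$, this gives $\lambda(v)\asymp(v+1)^\delta\asymp v^\delta$. On the compact interval $[1,v_0]$, $\lambda$ is continuous via \eqref{lambdadef} and strictly positive: indeed \eqref{lambdadef} shows $\lambda(v)=v$ on $[0,1]$ (the integral vanishes since the upper limit $(v-1)/2\le 0$), so $\lambda(1)=1$, and the qualitative properties of $\lambda$ encoded in Lemma~\ref{LambdaLem} prevent it from dropping to zero before the asymptotic regime begins. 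Hence $\lambda(v)\asymp 1\asymp v^\delta$ on this interval as well. Simultaneously, the bounds $\alpha_t=1+O(1/\log t)$ and $\alpha_t\ge\alpha_0>0$ together give $\alpha_t\asymp 1$.

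Substituting into Theorem~\ref{thm2},
\[
T(x,t)=x\alpha_t(\log t)\lambda(v)+O(x)\asymp x(\log t)v^\delta+O(x),
\]
and the error $O(x)$ is absorbed into the main term because $(\log t)v^\delta\ge(\log 2)\cdot 1>0$ whenever $t\ge 2$ and $v\ge 1$. This proves the first estimate $T(x,t)\asymp xv^\delta\log t$. Dividing by Saias' estimate $D(x,t)\asymp x/v$ and using $v\log t=\log x$,
\[
\frac{T(x,t)}{D(x,t)}\asymp\frac{xv^\delta\log t}{x/v}=v^{\delta+1}\log t=v^\delta\log x,
\]
which is the second estimate.

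The only delicate step is the uniform positive lower bound on $\lambda$ on bounded subintervals of $[1,\infty)$; above the threshold $v_0$ the asymptotic \eqref{lamapp} forces the lower bound, while below it one relies on continuity and the qualitative properties of $\lambda$ provided by Lemma~\ref{LambdaLem}. Once these inputs are available, the remainder is routine asymptotic bookkeeping.
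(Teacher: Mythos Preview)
Your proof is correct and follows exactly the route the paper indicates: it states only that Corollary~\ref{ordmag} follows from Theorem~\ref{thm2} together with Saias' estimate $D(x,t)\asymp x/v$, and you have supplied precisely those details.

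One minor remark: the appeal to ``qualitative properties of $\lambda$ encoded in Lemma~\ref{LambdaLem}'' does not by itself establish that $\lambda(v)>0$ on a bounded interval $[1,v_0]$, since that lemma only delivers the asymptotic expansion \eqref{lamapp} with an unspecified $O$-constant. The positivity should instead be verified directly from the definition \eqref{lambdadef}; for instance, on $[1,2]$ one computes $\lambda(v)=2v-1-2v\log\tfrac{2v}{v+1}$, which is easily seen to be positive, and the pattern continues (or one may simply read it off Figure~\ref{figure2}). This is a routine check rather than a genuine gap. Similarly, your absorption of the $O(x)$ error into the main term handles the upper bound cleanly; for the lower bound one also needs to observe that when $(\log t)v^\delta$ is bounded, so is $x=t^v$, and then $T(x,t)\ge 1$ suffices trivially.
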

Corollary \ref{ordmag} shows how the average size of $\tau(n)$ varies from $\asymp (\log x)^{1+\delta}=(\log x)^{1.713...}$
in the case of $2$-dense integers, to $\asymp \log x $ when $t=x$ and $\mathcal{D}(x,t)$ contains all natural numbers up to $x$.
As in the case of natural numbers, the typical size of $\tau(n)$ is much smaller than the average size. 
Theorem 1 of \cite{OMB} shows that almost all $n \in \mathcal{D}(x,t)$ satisfy
$$
\tau(n)^{1+o(1)}= v^{ C\log 2} (\log t)^{\log 2}, \quad C=(1-e^{-\gamma})^{-1}=2.280...,
$$ 
which in the case of $2$-dense integers is $\asymp (\log x)^{C\log 2} = (\log x)^{1.580...}$. 

For more precise information, we combine the estimates for $T(x,t)$, $\lambda(v)$ and $\alpha_t$ in Theorem \ref{thm2}, to obtain
the following three asymptotic estimates, which reveal the behavior of $T(x,y)$ in each of the three cases:
(i) fixed $t$, (ii) $t\to \infty$,
$v \to \infty$ and (iii) fixed $v$.  
\begin{corollary}\label{cor2}
Uniformly for $x\ge t\ge 2$, $v=\log x /\log t$,
$$
T(x,t)=a_t x (\log xt)^\delta \left\{ 1+ O\left(\frac{1}{v^\delta \log t}+\frac{1}{v^{1+\delta}} \right) \right\},
$$
where $a_t=\lambda_0 \alpha_t (\log t)^{1-\delta}$, 
$$
T(x,t)=\lambda_0 x (v+1)^\delta (\log t)
 \left\{ 1+ O\left(\frac{1}{\log t}+\frac{1}{v^{1+\delta}} \right) \right\},
$$
$$
T(x,t)= x (\log t) \lambda(v) \left\{ 1+ O\left(\frac{1}{\log t}\right) \right\}.
$$
\end{corollary}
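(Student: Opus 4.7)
\bigskip

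\noindent\textbf{Proof plan.} Corollary \ref{cor2} is essentially algebraic bookkeeping on top of Theorem \ref{thm2}. The three formulas arise from simplifying the estimate $T(x,t)=x\alpha_t(\log t)\lambda(v)+O(x)$ to differing depths, using the two auxiliary ingredients already supplied by Theorem \ref{thm2}, namely $\alpha_t=1+O(1/\log t)$ and the expansion \eqref{lamapp}. I will use throughout the standing assumption $x\ge t\ge 2$, which forces $v\ge 1$ and hence $(v+1)^\delta\asymp v^\delta\ge 1$.

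I would prove the three estimates in the reverse order of how they are stated, since each one strengthens the previous. \emph{Third formula.} Apply $\alpha_t=1+O(1/\log t)$ to the leading term of Theorem \ref{thm2} to write it as $x(\log t)\lambda(v)\{1+O(1/\log t)\}$. To absorb the additive $O(x)$ into the same multiplicative error, I need $\lambda(v)\gg 1$ on $v\ge 1$: this follows from $\lambda(1)=1$ (directly from \eqref{lambdadef}, whose integral is empty for $v\le 1$) together with the asymptotic $\lambda(v)\sim\lambda_0 v^\delta$ from \eqref{lamapp}, with continuity filling any bounded gap. Then $O(x)=O(x(\log t)\lambda(v)/\log t)$, giving the claimed error.

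\emph{Second formula.} Substitute \eqref{lamapp} into $x(\log t)\lambda(v)$. The leading piece is $\lambda_0 x(v+1)^\delta\log t$; the $\lambda_1$ piece contributes relative error $O((v+1)^{-1-\delta})=O(1/v^{1+\delta})$, and the $O((v+1)^{-1.962})$ piece is of strictly smaller order, so it is absorbed. Replacing $\alpha_t$ by $1$ costs a further relative error $O(1/\log t)$, and the $O(x)$ remainder becomes relative error $O(1/(v^\delta\log t))\le O(1/\log t)$ since $v\ge 1$. \emph{First formula.} This one retains $\alpha_t$ inside the constant $a_t=\lambda_0\alpha_t(\log t)^{1-\delta}$, so the $O(1/\log t)$ error from $\alpha_t$ is not needed. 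Using $\log(xt)=(v+1)\log t$ one rewrites $a_t x(\log xt)^\delta=\lambda_0\alpha_t x(v+1)^\delta\log t$, which is exactly the main term of the previous step. The only surviving errors are $O(1/v^{1+\delta})$ from the $\lambda$-expansion and $O(1/(v^\delta\log t))$ from the $O(x)$ remainder of Theorem \ref{thm2}.

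\emph{Expected difficulty.} There is no analytic obstacle once Theorem \ref{thm2} is in hand; the argument is a sequence of routine simplifications. The only point that requires a moment's thought is the lower bound $\lambda(v)\gg 1$ for $v\ge 1$ used to convert the additive $O(x)$ in Theorem \ref{thm2} into a relative error, and this is immediate from \eqref{lambdadef} and \eqref{lamapp}. Everything else is bookkeeping of relative error sizes, taking care to note that $v\ge 1$ causes $(v+1)^\delta$ and $v^\delta$ to be interchangeable up to constants.
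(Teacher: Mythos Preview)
Your proposal is correct and follows exactly the approach the paper indicates: the corollary is stated without a separate proof, merely with the remark that one ``combine[s] the estimates for $T(x,t)$, $\lambda(v)$ and $\alpha_t$ in Theorem \ref{thm2}.'' Your three-step derivation carries this out in detail and the bookkeeping of relative errors is accurate.

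One small point: your justification of $\lambda(v)\gg 1$ for $v\ge 1$ via ``$\lambda(1)=1$, $\lambda(v)\sim\lambda_0 v^\delta$, and continuity filling any bounded gap'' is not quite complete as stated, since continuity of a positive function on $[1,\infty)$ that tends to infinity does not by itself prevent a zero in the interior. A clean fix, already available at this point in the paper, is to combine Corollary \ref{ordmag} ($T(x,t)\asymp x v^\delta \log t$) with Theorem \ref{thm2} and let $\log t\to\infty$ at fixed $v$, which forces $\lambda(v)\asymp v^\delta$ for $v\ge 1$.
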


With the corresponding three estimates for $D(x,t)$ in \cite[Corollaries 1.1-1.3]{PDD}, we obtain
the following formulas for the mean value of $\tau(n)$ over $t$-dense integers $n\le x$. 
\begin{corollary}\label{cor2}
Uniformly for $x\ge t\ge 2$, $v=\log x /\log t$,
$$
\frac{T(x,t)}{D(x,t)}=b_t  (\log xt)^{1+\delta} 
\left\{ 1+ O\left(\frac{1}{v^\delta \log t}+\frac{1}{v^{1+\delta}} \right) \right\},
$$
where $b_t=\lambda_0 (1-e^{-\gamma}) \beta_t (\log t)^{-\delta}$, $0<\beta_0\le \beta_t=1+O(1/\log t)$, 
$$
\frac{T(x,t)}{D(x,t)}=\lambda_0 (1-e^{-\gamma}) (v+1)^\delta \log( xt)
  \left\{ 1+ O\left(\frac{1}{\log t}+\frac{1}{v^{1+\delta}} \right) \right\},
$$
$$
\frac{T(x,t)}{D(x,t)}= (\log t) \frac{  \lambda(v)}{d(v)} \left\{ 1+ O\left(\frac{1}{\log t}\right) \right\},
$$
where the function $d(v)$ is as in \cite{PDD}.
\end{corollary}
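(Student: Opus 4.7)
The plan is a direct division: for each of the three formulas claimed, take the matching asymptotic for $T(x,t)$ from the preceding Corollary \ref{cor2} and divide by the parallel asymptotic for $D(x,t)$ established in \cite[Corollaries 1.1--1.3]{PDD}. Those companion results furnish, uniformly for $x\ge t\ge 2$, an estimate of the shape $D(x,t) = c_t\, x/\log(xt)\{1+O(1/(v^\delta\log t)+1/v^{1+\delta})\}$ with $c_t = (1-e^{-\gamma})^{-1}\beta_t\log t$ and $0<\beta_0\le\beta_t = 1+O(1/\log t)$; the cleaner form $D(x,t) = x/[(1-e^{-\gamma})(v+1)]\{1+O(1/\log t + 1/v^{1+\delta})\}$; and finally the exact-in-$v$ estimate $D(x,t) = x\,d(v)\{1+O(1/\log t)\}$. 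By construction each shares the error shape of the corresponding $T$-estimate.

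Forming the three ratios, the $x$-factor cancels and the constants combine transparently. In the first case, $a_t/c_t = \lambda_0(1-e^{-\gamma})(\alpha_t/\beta_t)(\log t)^{-\delta}$; since both $\alpha_t$ and $\beta_t$ are positive, bounded below by a positive constant, and equal to $1+O(1/\log t)$, their quotient enjoys exactly the same properties, so it may be absorbed into a single $\beta_t$ yielding the advertised $b_t = \lambda_0(1-e^{-\gamma})\beta_t(\log t)^{-\delta}$. In the second case, the identity $(v+1)\log t = \log(xt)$ converts the factor $\lambda_0(1-e^{-\gamma})(v+1)^{1+\delta}\log t$ produced by the division into $\lambda_0(1-e^{-\gamma})(v+1)^\delta\log(xt)$. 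In the third case the cancellation is immediate and yields $(\log t)\lambda(v)/d(v)$.

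The error terms in each ratio are handled by the elementary identity $(1+O(\eps_1))/(1+O(\eps_2)) = 1+O(\eps_1+\eps_2)$, valid since the denominator error is eventually small; because the numerator and denominator errors share the same shape, this produces exactly the error displayed in the statement. The main obstacle, such as it is, is purely notational bookkeeping in the first formula, namely verifying that the composite constant $\alpha_t/\beta_t$ inherits both the positivity and the $1+O(1/\log t)$ estimate needed to play the role of $\beta_t$ in $b_t$; this is immediate from Theorem \ref{thm2} and from the properties proved in \cite{PDD}. No further analytic input is required, so the corollary is a formal consequence of the preceding corollary and the cited estimates for $D(x,t)$.
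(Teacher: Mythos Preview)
Your proposal is correct and is exactly the approach the paper takes: the paper offers no displayed proof, only the sentence ``With the corresponding three estimates for $D(x,t)$ in \cite[Corollaries 1.1--1.3]{PDD}, we obtain the following formulas,'' and you have simply written out that division in detail. Your handling of the constant in the first formula---absorbing the ratio $\alpha_t/\beta_t$ into a relabeled $\beta_t$ with the same properties---is the right reading of the paper's admittedly overloaded notation.
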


The $t$-dense integers are a special case of a family of integer sequences that arise as follows. 
Let $\theta$ be an arithmetic function.
Let $\mathcal{B}=\mathcal{B}_\theta$ be the set of positive integers containing $n=1$ and all those $n \ge 2$ with prime factorization $n=p_1^{\alpha_1} \cdots p_k^{\alpha_k}$, $p_1<p_2<\ldots < p_k$, which satisfy 
\begin{equation}\label{Bdef}
p_{i} \le \theta\big( p_1^{\alpha_1} \cdots p_{i-1}^{\alpha_{i-1}} \big) \qquad (1\le i \le k).
\end{equation}
We write $\mathcal{B}(x)=\mathcal{B}\cap [1,x]$ and $B(x)=|\mathcal{B}(x)|$. 
When $\theta(n)=nt$, then $\mathcal{B}$ is the set of $t$-dense integers \cite{Saias1,Ten86,PDD}. 
If $\theta(n)=\sigma(n)+1$, where $\sigma(n)$ is the sum of the positive divisors of $n$, then 
$\mathcal{B}$ is the set of practical numbers \cite{Mar, Saias1, Sier, Sri, Stew, Ten86, PDD}, i.e. integers $n$ such that every $m\le n$ can be expressed as a sum of distinct positive divisors of $n$. By making some adjustments to the proof of Theorem \ref{thm2},
we will obtain the following result. 

\begin{theorem}\label{thmgen}
Assume  $\max(2,n)\le \theta(n) \ll n \exp((\log n)^a)$ for $n\ge 1$ , where $a$ is any constant 
with $a < (1-\delta)/(2-\delta)=0.2226...$ Then
$$
T(x) := \sum_{n\in \mathcal{B}(x)} \tau(n) = \nu_\theta x (\log x)^\delta + O(x),
$$
where $\delta=0.7136125...$ is as in Lemma \ref{LambdaLem} and $\nu_\theta$ is a positive constant.
\end{theorem}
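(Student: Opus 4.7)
The plan is to adapt the proof of Theorem \ref{thm2} by replacing the uniform ratio $t$ with the arithmetic function $\theta(m)$ at each recursive stage. The key observation is that $\mathcal{B}=\mathcal{B}_\theta$ has an intrinsic recursive structure coming from \eqref{Bdef}: every $n\in\mathcal{B}$ with $n>1$ factors uniquely as $n=mq^\beta$, where $q=P^+(n)$ is the largest prime factor of $n$, $m\in\mathcal{B}$, $P^+(m)<q\le\theta(m)$, and $\tau(n)=\tau(m)(\beta+1)$. This yields the identity
$$
T(x)=1+\sum_{m\in\mathcal{B}}\tau(m)\sum_{\substack{P^+(m)<q\le\theta(m)\\ mq^\beta\le x,\ \beta\ge 1}}(\beta+1).
$$

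Using the density estimate $B(x)\ll x/\log x$ (known for $\mathcal{B}_\theta$ in the assumed generality from the author's earlier work on practical and related numbers), the contribution of prime powers with $\beta\ge 2$ is $O(x)$ by a routine bound. After swapping the order of summation, the $\beta=1$ part becomes
$$
2\sum_{q}\sum_{\substack{m\in\mathcal{B},\ mq\le x\\ q\le\theta(m)}}\tau(m),
$$
which is the direct analogue of the corresponding sum in the proof of Theorem \ref{thm2}, with the condition $q\le mt$ replaced by $q\le\theta(m)$.

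Next I would apply Theorem \ref{thm1} to control inner sums as weighted counts of $\tau$ over $q$-rough integers, and then use partial summation against the density of $\mathcal{B}$. The exponent $\delta$ appears as the characteristic exponent of the integral equation \eqref{lambdadef}, so the generalized functional equation satisfied by $T(x)$ produces the same dominant behavior $x(\log x)^\delta$. The constant $\nu_\theta$ will emerge as a convergent integral depending on $\theta$ through the ratio $\theta(n)/n$, reducing to $\lambda_0\alpha_t(\log t)^{1-\delta}$ when $\theta(n)=nt$ is constant-ratio, in agreement with Corollary \ref{cor2}.

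The main obstacle is the error analysis, and it is here that the threshold $a<(1-\delta)/(2-\delta)=0.2226\ldots$ enters. In the proof of Theorem \ref{thm2} many error terms are controlled by negative powers of $\log t$; after the generalization, the effective ``$\log t$'' at an integer $m$ is $\log(\theta(m)/m)\ll(\log m)^a$, so inverse-log factors become $(\log m)^{-a}$ while direct-log dependencies acquire factors $(\log m)^a$. When these errors are aggregated by summing $\tau(m)$ over $m\in\mathcal{B}(x)$ (whose first moment is of order $x(\log x)^\delta$), the dominant error term scales like $x(\log x)^{a(2-\delta)+\delta-1}$, which is $O(x)$ precisely when $a(2-\delta)<1-\delta$. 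Verifying this estimate term by term in the adapted proof, and confirming that no hidden logarithmic factor pushes any single piece above $O(x)$, constitutes the principal technical work.
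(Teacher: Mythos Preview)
Your decomposition is not the one the paper uses, and the step where you ``apply Theorem~\ref{thm1}'' does not fit it. The paper's proof of Theorem~\ref{thm2} (and hence of Theorem~\ref{thmgen}) is based on Lemma~\ref{funceq}, which records the \emph{complementary} factorisation: every integer $m\ge 1$ is uniquely $m=nr$ with $n\in\mathcal{B}$ and $P^-(r)>\theta(n)$, so that
\[
x\log x + O(x)=\sum_{m\le x}\tau(m)=T(x)+\sum_{n\in\mathcal{B}(x)}\tau(n)\,S\!\left(\frac{x}{n},\theta(n)\right).
\]
Here Theorem~\ref{thm1} applies directly, because $S(\cdot,\cdot)$ is literally a $\tau$-sum over rough integers. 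After using $L=1$ (Corollary~\ref{L1cor}/Lemma~\ref{lemfinsum}) one obtains an equation for $T(x)$ involving $\xi$, and a further key step replaces $\theta(n)$ by $2n$ so that the analysis collapses to the $t=2$ case of Theorem~\ref{thm2}. The threshold $a<(1-\delta)/(2-\delta)$ enters concretely as the convergence condition for $\sum_{n\in\mathcal{B}}\tau(n)(\log n)^{a}/(n\log^2 n)$ in Lemma~\ref{lemfinsum}, bootstrapped from the crude bound $T(x)\le T(x,t)\ll x(\log x)^{\delta+a(1-\delta)}$ with $t\asymp\exp((\log x)^a)$.

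By contrast, your largest-prime recursion $n=mq^\beta$ produces an inner sum over $m\in\mathcal{B}$ subject to $P^+(m)<q\le\theta(m)$, not a sum over $q$-rough integers; Theorem~\ref{thm1} has nothing to say about it. What you get is an identity of the shape $T(x)\approx 2\sum_{m\in\mathcal{B}}\tau(m)\bigl(\pi(\min(\theta(m),x/m))-\pi(P^+(m))\bigr)$, which involves $P^+(m)$ and does not close to a one-variable integral equation of the type \eqref{lambdadef}. Without an additional device to eliminate the dependence on $P^+(m)$ you will not arrive at the function $\lambda$ or the exponent $\delta$. Your heuristic for the error, $x(\log x)^{a(2-\delta)+\delta-1}$, lands on the right threshold, but it is attached to a calculation that has not actually been set up.
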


In 1991, Margenstern \cite{Mar} proposed seven conjectures about practical numbers, together with empirical evidence to support them. 
Theorem \ref{thmgen} confirms Conjecture 4 of \cite{Mar}, which says that $T(x)\sim \nu x (\log x)^\delta$ for 
some constants $\nu, \delta \in (1/2,1)$. 
Although we have not determined the constant factor $\nu$, empirical evidence suggests that $\nu =0.54...$ in the case of practical 
numbers. 
Thirty years after the publication of Margenstern's paper \cite{Mar}, six of the seven conjectures have now essentially been resolved \cite{Mel,PW,PDD,CFP, OMB}, while the remaining one \cite[Conjecture 2]{Mar}, which is about the count of twin practical numbers, has seen significant progress \cite{PW,OMB}.

We take this opportunity to give an improved version of \cite[Thm. 5.1]{PDD}, an asymptotic estimate for the 
counting function $B(x)=|\mathcal{B}(x)|$. Theorem \ref{thmimp} has a sharper error term and allows for $\theta(n)$
to grow faster. 
Let
$$l(x)=\exp \left( \frac{\log x}{\log_2 x \log_3^3 x}\right).$$

\begin{theorem}\label{thmimp}
Assume $\max(2,n)\le \theta(n) \le n f(n)$, where $f(x) \ll l(x)$, $f(x)$ is non-decreasing and 
$(\log f(x)) / \log x$ is eventually decreasing.
Then 
$$
B(x) = \frac{c_\theta x}{\log x} \bigl\{1+O\left(E(x) \right)\bigr\},
$$
where
$$
E(x):=\int_x^\infty \frac{\log f(y)}{y\log^2 y} dy
$$
and the positive constant $c_\theta$ is given by
$$
c_\theta = \frac{1}{1-e^{-\gamma}} \sum_{n\in \mathcal{B}} \frac{1}{n}  \Biggl( \sum_{p\le \theta(n)}\frac{\log p}{p-1} - \log n\Biggr) \prod_{p\le \theta(n)} \left(1-\frac{1}{p}\right).
$$
\end{theorem}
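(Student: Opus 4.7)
The plan is to refine the inductive proof of \cite[Thm.~5.1]{PDD} for $B(x)$, by inserting a sharper prime number theorem and by tracking the dependence on $\theta$ more carefully. The starting point is the Buchstab-type identity obtained from the unique representation of each $n\in\mathcal{B}(x)$ with $n>1$ as $n=mp^a$, where $p=P^+(n)$, $a=v_p(n)\ge 1$, and $m\in\mathcal{B}$ satisfies $P^+(m)<p\le\theta(m)$:
$$
B(x) = 1 + \sum_{m\in\mathcal{B}} \; \sum_{P^+(m)<p\le\theta(m)} \; \sum_{a\ge 1,\; mp^a\le x} 1.
$$
I would isolate the $a=1$ contribution, which is $\pi\bigl(\min(x/m,\theta(m))\bigr)-\pi(P^+(m))$, and absorb the prime powers $a\ge 2$ into an $O(x/\log^2 x)$ error using $\sum_p 1/p^2\ll 1$ together with the crude bound $B(x)\ll x/\log x$ already available from \cite{PDD}.

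I would then replace $\pi$ by $\operatorname{li}$ with the strong error term $O(y\exp(-c\sqrt{\log y}))$ and recast the $m$-sum as a Riemann--Stieltjes integral against $dB$. This yields an integral equation for $B(x)$ whose main solution is $c_\theta x/\log x$; identifying the constant reduces to an algebraic rearrangement involving Mertens' theorem in sharp form, exactly as in \cite{PDD}, and produces the expression for $c_\theta$ in the statement. To pass from the integral equation to the claimed error term I would argue by induction on $x$: assuming $B(y)=(c_\theta y/\log y)(1+O(E(y)))$ for $y<x$, substitute back, and use the monotonicity of $f$ together with the hypothesis that $(\log f(x))/\log x$ is eventually decreasing to replace $f(y)$ by $f(x)$ on the relevant range at negligible cost. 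A direct computation then shows that the PNT error, summed over $\mathcal{B}$ and weighted by the range of primes up to $\theta(m)=mf(m)$, collapses to $\int_x^\infty \log f(y)/(y\log^2 y)\,dy = E(x)$.

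The main technical obstacle is the treatment of primes $p$ in the range $m<p\le\theta(m)$, which is non-trivial precisely because we now allow $f$ as large as $l(x)=\exp\bigl(\log x/(\log_2 x\log_3^3 x)\bigr)$. In this regime $\pi(\theta(m))$ is not bounded by $O(\pi(m))$, so I would split the prime sum into a \emph{bulk} part $p\le m$, which supplies the constant $c_\theta$ via the Mertens product naturally appearing in $\mathcal{B}$-sums, and a \emph{tail} part $m<p\le\theta(m)$ whose $\mathcal{B}$-sum is, to leading order, precisely $E(x)$. Verifying that the admissible rate $f\ll l$ keeps both the induction step stable and the integral defining $E(x)$ convergent---this is where the specific shape $\log_3^3 x$ in the definition of $l$ must be used---should be the most delicate step, and is what allows the improvement over \cite[Thm.~5.1]{PDD}.
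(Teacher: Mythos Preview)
Your route is genuinely different from the paper's, and as written it has a real gap.

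The paper does \emph{not} iterate a Buchstab identity on $B(x)$ via the largest prime factor, and it does not use the prime number theorem or induction on $x$. Instead it applies Lemma~\ref{funceq} with $f\equiv 1$ to obtain
\[
\lfloor x\rfloor=\sum_{n\in\mathcal{B}(x)}\Phi\!\left(\frac{x}{n},\theta(n)\right),
\]
an identity for a \emph{known} left-hand side in which $B$ appears only through the index set. It then inserts the sharp estimate for $\Phi(x,y)$ from Lemma~\ref{PhiLemma} (featuring Buchstab's $\omega$), replaces each $\theta(n)$ by $2n$, passes to an integral by partial summation, and solves the resulting equation by Laplace inversion exactly as in \cite{PDD}. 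The sole improvement over \cite[Thm.~5.1]{PDD} is a dyadic argument bounding the contribution of the penultimate term $-y/x\big|_{x\ge y}$ in Lemma~\ref{PhiLemma} by $O\bigl(x\log f(x)/\log^2 x\bigr)$ instead of $O\bigl(x f(x)/\log^2 x\bigr)$; this is where $f$ becomes $\log f$ in $E(x)$. The role of $l(x)$ is only to import the a~priori bound $B(x)\ll x/\log x$ from \cite{PW}; nothing in the argument uses the specific shape $\log_3^3 x$.

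In your sketch, the assertion that ``the PNT error \ldots\ collapses to $E(x)$'' cannot be correct: the remainder $y\exp(-c\sqrt{\log y})$ is negligible compared with $y\log f(y)/\log^2 y$ and is never the bottleneck. In a largest-prime-factor recursion the term of size $E(x)$ would have to come from the interaction between the prime range $P^+(m)<p\le\theta(m)$ and the recursive use of $B$, but your bulk/tail split at $p=m$ is ill-posed (the lower limit is $P^+(m)$, not $m$), and you have not isolated any mechanism that actually produces the integrand $\log f(y)/(y\log^2 y)$. More seriously, the scheme ``assume $B(y)=(c_\theta y/\log y)(1+O(E(y)))$ for $y<x$ and substitute back'' does not control the implied constant: each pass through the recursion multiplies it, and the constant $c_\theta$ is not available at the inductive stage. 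The paper's non-recursive identity for $\lfloor x\rfloor$ plus Laplace inversion avoids all of this; if you insist on your route you would need an a~priori bound together with a contraction argument for the associated integral operator, which is substantially more than what you have outlined.
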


For example, if $\theta(n) \ll n (\log n)^A$, where $A$ is an arbitrary constant, then $E(x)\ll \log\log x / \log x$,
while \cite[Thm. 5.1]{PDD} only allows for $A<1$ and has a relative error term of $O(\log^A x / \log x)$ in that case. 
Comparing Theorem \ref{thmimp} with \cite[Thm. 5.1]{PDD}, we have essentially replaced
$f(y)$ with $\log f(y)$ in the definition of the error term $E(x)$. 
The improvement comes from a more careful treatment of the penultimate term in Lemma \ref{PhiLemma},
when this estimate is inserted into Lemma \ref{funceq}.
If $B(x) \sim c_\theta x/\log x $, then $\theta(n)$ cannot grow much faster than is allowed by Theorem \ref{thmimp}, 
 since $\theta(n)\asymp n^{1+\varepsilon}$ implies $B(x)\sim c_\theta x/(\log x)^{1-\eta}$ for some $\eta>0$,
by \cite[Thm. 3]{SPA}.

\section{The function $\xi(u)$}

If we sort the integers $n\le x$ according to their smallest prime factor $p=P^-(n)$ and 
ignore higher powers of $p$, we expect that
$$
S(x,y)=\sum_{n \le x \atop P^-(n)>y} \tau(n) \approx \sum_{\sqrt{x}<p\le x} 2 +  \sum_{y<p\le \sqrt{x}} 2 S(x/p,p).
$$
This suggests the definition \eqref{xidef}, provided $S(x,y) \approx x \xi(u)/\log y$. 
We have 
$$
u \xi(u)=
\begin{cases}
0 & u<1 \\
2 & 1\leq u<2 \\
4 \log (u-1)+2  \quad  & 2\leq u< 3 
\end{cases}
$$
and
\begin{equation}\label{xidiffeq}
(u \xi(u))'= 2 \xi(u-1) \qquad (u\neq 1,2).
\end{equation}
As with Buchstab's function $\omega(u)$, $\xi(u)$ is $k$ times continuously differentiable for $u>k+1$. 
By induction on $\lfloor u \rfloor$, \eqref{xidef} easily implies the bounds 
\begin{equation}\label{xiexbounds}
\frac{u+2}{4} \le \xi(u) \le u+1 \qquad (u\ge 1).
\end{equation}

Equation \eqref{xidiffeq} shows that $\xi(u)$ belongs to a family of functions $f$ that satisfy 
$uf'(u) + a f(u) + bf(u-1) = 0$, where $a$ and $b$ are constants. These equations were studied in great detail 
by  Hildebrand and Tenenbaum \cite{DDE}.

\begin{lemma}\label{xilemma}
We have
\begin{align}
\xi(u)  =2\omega(u) & + \int_0^u \omega(t) \omega(u-t) dt \qquad & (u\ge 0), \label{xilemeq1} \\
|\xi''(u)| & < \frac{2^u}{7\Gamma(u+1)} \qquad &  (u\ge 3.5) , \label{xilemeq2} \\
|\xi'(u)-e^{-2\gamma}| & < \frac{2^u}{7\Gamma(u+1)} \qquad & (u\ge 2.5),  \label{xilemeq3} \\
|\xi(u)-(u+2)e^{-2\gamma}| & < \frac{2^u}{7\Gamma(u+1)} \qquad & (u\ge 1.5), \label{xilemeq4} \\
\xi(u)- (u+2)e^{-2\gamma} & \ll  \left(\frac{e}{u\log 2u}\right)^{u} \qquad & (u\ge 1). \label{xilemeq5} 
\end{align}
\end{lemma}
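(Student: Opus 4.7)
The plan is to prove \eqref{xilemeq1} first and then deduce the remaining bounds from it together with known asymptotics for Buchstab's function. For \eqref{xilemeq1}, I would set $g(u) = 2\omega(u) + (\omega \ast \omega)(u)$ with $(\omega \ast \omega)(u) = \int_0^u \omega(s)\omega(u-s)\,ds$, and verify that $g$ satisfies the same Volterra integral equation as $\xi$, namely $uf(u) = 2 + 2\int_1^{u-1}f(t)\,dt$, with matching initial data on $[0,2)$: $g = 0$ on $[0,1)$ (both factors vanish), and $g(u) = 2/u$ on $[1,2)$ because the convolution integrand requires $s \ge 1$ and $u-s \ge 1$ simultaneously. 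For $u \ge 2$, I would compute $\int_1^{u-1}(\omega \ast \omega)(t)\,dt$ via Fubini and the companion identity $v\omega(v) = 1 + \int_0^{v-1}\omega(s)\,ds$; after manipulation the required equality reduces to $v\omega(v) - (v-1)\omega(v-1) = \int_{v-2}^{v-1}\omega(s)\,ds$, which is immediate from $(v\omega(v))' = \omega(v-1)$. Uniqueness of continuous solutions (by induction on $\lfloor u\rfloor$) then yields $g = \xi$.

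For \eqref{xilemeq4}, I would set $r(u) = \omega(u) - e^{-\gamma}$ for $u \ge 1$. Substituting $\omega = e^{-\gamma} + r$ into \eqref{xilemeq1} and using $\int_1^{u-1}\omega(s)\,ds = u\omega(u) - 1$ to simplify yields the clean decomposition
\begin{equation*}
\xi(u) - (u+2)e^{-2\gamma} = 2r(u)(1 + ue^{-\gamma}) + \int_1^{u-1}r(s)r(u-s)\,ds.
\end{equation*}
I would then insert explicit bounds of the form $|r(u)| \le C/\Gamma(u+1)$ available from \cite{DDE} to obtain \eqref{xilemeq4} for $u$ beyond some threshold (the convolution is controlled by a Laplace-type estimate, being concentrated near the endpoints $s=1,u-1$ where it is of order $|r(u-2)|$), and handle the remaining range down to $u=1.5$ by direct numerical computation using the piecewise formulas $u\xi(u) = 2$ on $[1,2)$ and $u\xi(u) = 4\log(u-1) + 2$ on $[2,3)$. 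The bounds \eqref{xilemeq3} and \eqref{xilemeq2} would then follow by differentiating \eqref{xilemeq1} once and twice respectively (with care at the integer break points of $\omega$) and invoking the corresponding bounds on $\omega'$ and $\omega''$ from \cite{DDE}.

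For \eqref{xilemeq5}, I note that the error $E(u) = \xi(u) - (u+2)e^{-2\gamma}$ itself satisfies the difference-differential equation $uE'(u) + E(u) = 2E(u-1)$, of exactly the form studied by Hildebrand and Tenenbaum in \cite{DDE}; their framework yields the super-exponential bound $E(u) \ll (e/(u\log 2u))^u$ matching the Dickman regime, which is sharper than what the decomposition above would give directly since it avoids polynomial losses from the convolution estimate.

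The main obstacle I anticipate is securing the explicit constant $1/7$ in \eqref{xilemeq2}--\eqref{xilemeq4}. This requires numerically sharp bounds on $|\omega - e^{-\gamma}|$, $|\omega'|$, and $|\omega''|$ from \cite{DDE}, together with careful verification of each inequality at its stated threshold ($u = 1.5, 2.5, 3.5$), where the asymptotic bounds are least tight. Once \eqref{xilemeq1} is in hand, the remaining analytic work is routine calculus with the convolution structure.
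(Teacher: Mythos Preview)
Your approach is valid but differs from the paper's in two notable places. For \eqref{xilemeq1}, you verify the integral equation directly and appeal to uniqueness; the paper instead computes the Laplace transform $\hat{\xi}(s)$ from \eqref{xidiffeq} and observes that $1+\hat{\xi}(s)=(1+\hat{\omega}(s))^2$, so the identity drops out of the convolution theorem in one line. Your route is more elementary but longer.

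The more substantial divergence is in \eqref{xilemeq2}--\eqref{xilemeq4}. You start from \eqref{xilemeq4} via the convolution decomposition and bounds on $r=\omega-e^{-\gamma}$, and then plan to differentiate for \eqref{xilemeq3} and \eqref{xilemeq2}, redoing the estimate at each level using bounds on $\omega',\omega''$. The paper reverses the order: it proves \eqref{xilemeq2} first by a self-contained bootstrap that uses \emph{only} the relation $u\xi''(u)=-2\int_{u-1}^{u}\xi''(t)\,dt$ (a consequence of \eqref{xidiffeq}), with no input from $\omega$ at all. One verifies \eqref{xilemeq2} numerically on $[3.5,5]$, then observes that if $M$ is the infimum of points where it fails, the integral inequality combined with the monotonicity of $2^t/\Gamma(t+1)$ forces $|\xi''(M)|<2^M/(7\Gamma(M+1))$, a contradiction. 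Then \eqref{xilemeq3} and \eqref{xilemeq4} follow by integrating the previous bound from $u$ to $\infty$. This order is cleaner because integration preserves explicit constants automatically, whereas your differentiation scheme requires a fresh estimate (and fresh constant-tracking) at each step --- precisely the obstacle you anticipate. The paper's bootstrap makes the constant $1/7$ essentially free: whatever constant is verified on the finite initial interval propagates forever. For \eqref{xilemeq5} both of you cite \cite{DDE}.
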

\begin{proof}
With equation \eqref{xidiffeq}, we find that the Laplace transform 
$\hat{\xi}(s)$ is given by
\begin{equation}\label{xilap}
1+\hat{\xi}(s) = s^{-2} \exp\left( -2\gamma +2 \int_0^s (1-e^{-t}) t^{-1} dt \right) = (1+\hat{\omega}(s))^2,
\end{equation}
by \cite[Thm. III.6.5]{Ten}. This implies \eqref{xilemeq1}. 

For the next three estimates, we find exact formulas for $\xi(u), \xi'(u), \xi''(u)$ in terms of polylogarithms, on
the interval $[0,5]$, with the help of Mathematica. This shows that the claims are valid for $u\le 5$. 
For $u>3$, $\xi(u)$ is twice continuously differentiable, so that \eqref{xidiffeq} implies
$$
u \xi''(u) = 2\xi'(u-1)-2\xi'(u) = - 2 \int_{u-1}^u \xi''(t) dt \qquad (u>4).
$$
Assuming that \eqref{xilemeq2} is false, we define
$$
M:=\inf\{ u\ge 3.5: |\xi''(u)| \ge 2^u/(7\Gamma(u+1)) \},
$$
so that $M>5$. We have
$$
M |\xi''(M)| \le  2 \int_{M-1}^M |\xi''(t)| dt < 2 \int_{M-1}^M \frac{ 2^t}{7\Gamma(t+1)} dt < 2 \frac{2^{M-1}}{7\Gamma(M)},
$$
since $ 2^t/(7\Gamma(t+1))$ is decreasing for $t\ge 2$. Dividing by $M$, we get $ |\xi''(M)| < 2^M/(7\Gamma(M+1)$, which 
is impossible in light of the continuity of $\xi''(u)$ at $u=M>5$. This establishes \eqref{xilemeq2}.

Since $\omega(u)-e^{-\gamma}, \omega'(u) \ll 1/\Gamma(u+1)$ and $\int_0^\infty (\omega(u)-e^{-\gamma})du = e^{-\gamma}-1$,
\eqref{xilemeq1} implies $\xi(u) - (u+2)e^{-2\gamma}=o(1)$ and $\xi'(u)-e^{-2\gamma} = o(1)$ as $u\to \infty$. 
The estimate \eqref{xilemeq2} yields
$$
\left|\xi'(u)-e^{-2\gamma} \right| = \left| -\int_u^\infty \xi''(t) dt \right| \le \int_u^\infty \frac{ 2^t}{7\Gamma(t+1)} dt 
< \frac{ 2^u}{7\Gamma(u+1)},
$$
where the last inequality is valid for $u\ge 5$. Similarly, \eqref{xilemeq3} yields
$$
\left|\xi(u)-(u+2)e^{-2\gamma} \right| = \left| \int_u^\infty (\xi'(t)-e^{-2\gamma}) dt \right| \le \int_u^\infty \frac{ 2^t dt}{7\Gamma(t+1)} 
< \frac{ 2^u}{7\Gamma(u+1)},
$$
for $u\ge 5$.

The estimate \eqref{xilemeq5} follows from \cite[Thms. 1 and 2 ]{DDE}, which contains more
precise information about the error term.
\end{proof}

\section{Proof of Theorem \ref{thm1}}\label{SecProofThm1}

Define
$$
\Phi(x,y) = |\{n\le x: P^-(n)>y\}|.
$$
In Lemma 5 of \cite{OMB}, we obtained the following estimate for $\Phi(x,y)$
by combining several results from Section III.6 of Tenenbaum \cite{Ten}. 
Having three separate terms of size $x/\log y$ in our estimate of $\Phi(x,y)$, 
rather than one composite term, simplifies our proof.

\begin{lemma}\label{PhiLemma}
Uniformly for $x\ge 1$, $y\ge 2$, $u=\log x /\log y$,
\begin{equation*}
\begin{split}
\Phi(x,y) &= 1+ x \prod_{p\le y}\left(1-\frac{1}{p}\right)
+\frac{x}{\log y}\! \left\{\omega(u)-e^{-\gamma}\! -\left.\frac{y}{x}\right|_{x\ge y} 
+ O\left(\frac{e^{-u/3}}{\log y}\right)\! \right\}.\\
\end{split}
\end{equation*}
\end{lemma}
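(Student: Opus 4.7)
The plan is to derive the three-term decomposition from a classical asymptotic for $\Phi(x,y)$, namely Saias's refinement of Buchstab's theorem (see Section III.6 of \cite{Ten}), which for $2\le y\le x$ gives
$$
\Phi(x,y)=\frac{x\omega(u)-y}{\log y}+O\!\left(\frac{x\,e^{-u/3}}{\log^2 y}\right),
$$
the $e^{-u/3}$ factor arising from inserting the prime number theorem with de la Vallée-Poussin remainder into the Buchstab recursion that defines $\omega$. The rest of the work is algebraic: I have to split this composite main term into the three pieces appearing in the lemma and separately account for $n=1$, which the integral representation of $\omega$ does not see but $\Phi(x,y)$ counts.

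To perform the split I would write $\omega(u)=e^{-\gamma}+(\omega(u)-e^{-\gamma})$ and invoke the strong form of Mertens' theorem,
$$
\prod_{p\le y}\left(1-\frac{1}{p}\right)=\frac{e^{-\gamma}}{\log y}+O\!\left(\frac{e^{-c\sqrt{\log y}}}{\log y}\right),
$$
which again rests on PNT with classical remainder. This identifies $xe^{-\gamma}/\log y$ with $x\prod_{p\le y}(1-1/p)$, with a discrepancy that is absorbed into the $O(e^{-u/3}/\log y)$ inside the brace. What remains, $(x(\omega(u)-e^{-\gamma})-y)/\log y$, is exactly the bracketed expression of the lemma once $-y/\log y$ is displayed as $(x/\log y)(-y/x)$ and activated only for $x\ge y$, while the additive $1$ records the single contribution from $n=1$.

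The range $x<y$ requires a separate verification, since Saias's asymptotic is only stated for $y\le x$. Here $\Phi(x,y)=1$ trivially; meanwhile $\omega(u)=0$ for $u<1$, and the proposed formula collapses to $1+x\prod_{p\le y}(1-1/p)-xe^{-\gamma}/\log y+O(x/\log^2 y)$, in which Mertens forces the middle two terms to cancel within the stated error. The main obstacle will be verifying that the Mertens discrepancy and the Saias remainder merge into the single uniform bound $O(e^{-u/3}/\log y)$ inside the brace across the full range of $(x,y)$: for $u$ large the key is the sharp Hildebrand–Tenenbaum estimate $\omega(u)-e^{-\gamma}\ll u^{-u}$, which makes the $\omega$-contribution negligible and passes the burden entirely to the explicit Saias remainder; for $u$ bounded the factor $e^{-u/3}$ is essentially a constant and one relies on the fact that the Mertens discrepancy $O(e^{-c\sqrt{\log y}}/\log y)$ is then of smaller order than the target $O(1/\log^2 y)$.
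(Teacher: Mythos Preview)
The paper does not prove this lemma here; it simply quotes it from \cite[Lemma~5]{OMB}, where it was obtained ``by combining several results from Section~III.6 of Tenenbaum~\cite{Ten}''. Your plan is therefore aligned with the paper's source, and for bounded $u$ the rearrangement you describe works exactly as you say.

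The gap is in your handling of large $u$. The Mertens replacement introduces an error of size $O\bigl(xe^{-c\sqrt{\log y}}/\log y\bigr)$, and for this to fit inside the target $O\bigl(xe^{-u/3}/\log^2 y\bigr)$ one needs $e^{-c\sqrt{\log y}}\log y\ll e^{-u/3}$, which fails once $u\gg\sqrt{\log y}$ (e.g.\ keep $y$ bounded and let $x\to\infty$). Your proposed remedy, invoking $\omega(u)-e^{-\gamma}\ll u^{-u}$, does not touch this: the Mertens discrepancy comes from the $e^{-\gamma}$ half of your split, not from the $\omega(u)-e^{-\gamma}$ half, so shrinking the latter is irrelevant. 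What actually covers the large-$u$ range is a \emph{second} input from Section~III.6, namely a sieve estimate of the shape
\[
\Phi(x,y)=x\prod_{p\le y}\Bigl(1-\frac{1}{p}\Bigr)\bigl\{1+O(e^{-u})\bigr\},
\]
which delivers the Euler product directly without passing through Mertens; one then checks that for large $u$ the remaining displayed terms $x(\omega(u)-e^{-\gamma})/\log y$, $-y/\log y$ and $+1$ are each dominated by $xe^{-u/3}/\log^2 y$ and may be inserted for free. This two-formula patching is presumably why the paper speaks of combining \emph{several} results rather than one.
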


\begin{lemma}\label{Rlem}
Uniformly for $x\ge 1$, $y\ge 2$, $u=\log x /\log y$,
$$
\sum_{n\le x \atop P^-(n)>y} \frac{1}{n} 
=1+  (\log x) \prod_{p\le y}\left(1-\frac{1}{p}\right)
+\int_0^u (\omega(v)-e^{-\gamma})dv + O\left(\frac{1}{\log y}\right).
$$
\end{lemma}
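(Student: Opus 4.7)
The natural approach is Abel summation. Writing $\Phi(t,y)=\sum_{n\le t,\, P^-(n)>y} 1$ and applying partial summation with weights $1/n$, I get
$$
R(x,y):=\sum_{n\le x,\, P^-(n)>y}\frac{1}{n}=\frac{\Phi(x,y)}{x}+\int_1^x \frac{\Phi(t,y)}{t^2}\,dt.
$$
I would then substitute the formula for $\Phi(t,y)$ from Lemma \ref{PhiLemma} and integrate each of the four resulting pieces of $\Phi(t,y)/t^2$ separately.

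The bookkeeping runs as follows. The constant term $1$ in Lemma \ref{PhiLemma} contributes $\int_1^x t^{-2}dt=1-1/x$, which combines with the $1/x$ coming from $\Phi(x,y)/x$ to yield the leading $1$ on the right. The Mertens-product term contributes $\int_1^x t^{-1}\prod_{p\le y}(1-1/p)\,dt=(\log x)\prod_{p\le y}(1-1/p)$, plus a boundary contribution $\prod_{p\le y}(1-1/p)\ll 1/\log y$ from $\Phi(x,y)/x$. For the Buchstab term, the substitution $v=\log t/\log y$, $dt/t=(\log y)\,dv$, converts
$$
\int_1^x \frac{\omega(u_t)-e^{-\gamma}}{t\log y}\,dt=\int_0^u (\omega(v)-e^{-\gamma})\,dv,
$$
while the corresponding boundary piece $(\omega(u)-e^{-\gamma})/\log y$ is $O(1/\log y)$ since $\omega-e^{-\gamma}$ is bounded. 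The term $-y/(\log y)$ in $\Phi(t,y)$, restricted to $t\ge y$, contributes
$$
-\frac{y}{\log y}\int_y^x \frac{dt}{t^2}=-\frac{1}{\log y}+\frac{y}{x\log y}=O\!\left(\frac{1}{\log y}\right)\qquad(x\ge y),
$$
and is similarly $O(1/\log y)$ from the boundary term. Finally, the error term $O(e^{-u_t/3}/\log y)$ inside the braces gives an integral $\int_1^x e^{-u_t/3}/(t\log^2 y)\,dt$; after the same substitution this becomes $(\log y)^{-2}\int_0^u e^{-v/3}\log y\,dv\ll 1/\log y$.

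The argument is essentially routine partial summation, so there is no serious obstacle; the only care needed is to verify the uniformity of the $O(1/\log y)$ error throughout the full range $x\ge 1$, $y\ge 2$. The slightly delicate points are (i) handling the indicator $t\ge y$ cleanly when $x<y$ (in that case $R(x,y)=1$ and the two nontrivial main terms on the right cancel to $O(1/\log y)$ via Mertens' formula together with $\omega(v)=0$ for $v<1$), and (ii) checking that all boundary contributions from $\Phi(x,y)/x$ collapse into the $O(1/\log y)$ error, which they do because Mertens gives $\prod_{p\le y}(1-1/p)\ll 1/\log y$ and the bracketed expression in Lemma \ref{PhiLemma} is $O(1)$.
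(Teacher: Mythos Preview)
Your proposal is correct and is precisely the approach the paper takes: the paper's entire proof reads ``This follows from Lemma \ref{PhiLemma} and partial summation,'' and you have simply carried out that partial summation in full detail. The bookkeeping you give for each of the five pieces (constant, Mertens product, Buchstab term, the $-y/x$ indicator term, and the $O(e^{-u/3}/\log y)$ error) is accurate, including the uniformity check for $x<y$.
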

\begin{proof}
This follows from Lemma \ref{PhiLemma} and partial summation.
\end{proof}

\begin{proof}[Proof of Theorem \ref{thm1}]
We write
$$
S(x,y):= \sum_{n\le x \atop P^-(n)>y} \tau(n) =  \sum_{n\le x \atop P^-(n)>y}  \sum_{d|n} 1 
=  \sum_{d\le x \atop P^-(d)>y} \Phi(x/d,y).
$$
With Lemma \ref{PhiLemma}, we obtain
\begin{equation*}
\begin{split}
S(x,y)  = &  O\left(\frac{x}{\log^2 y}\right) + \Phi(x,y) 
+ \Biggl( \sum_{d\le x \atop P^-(d)>y} \frac{1}{d}\Biggr)  x \prod_{p\le y}\left(1-\frac{1}{p}\right) \\
& + \frac{x}{\log y}  \sum_{d\le x \atop P^-(d)>y} \frac{1}{d}
\left\{ \omega\left(\frac{\log x/d}{\log y}\right)-e^{-\gamma} -\left.\frac{y}{x/d}\right|_{d\le x/y}\right\}.
\end{split}
\end{equation*}
We estimate $\Phi(x,y)$ with Lemma \ref{PhiLemma}, and the first sum with Lemma \ref{Rlem}. 
In the second sum, the contribution from $d=1$ is 
$$
\frac{x(\omega(u)-e^{-\gamma}) -y|_{x\ge y} }{\log y},
$$
while the contribution from $1<d\le x$ is 
$$
\frac{x}{\log y} \int_0^u \omega(v)(\omega(u-v)-e^{-\gamma}) dv +  O\left(\frac{x}{\log^2 y}\right),
$$
by partial summation, Lemma  \ref{Rlem} and integration by parts. 
Combining these estimates with Mertens' formula, we obtain Theorem \ref{thm1}.
\end{proof}

\section{Proof of Theorem \ref{thm2}}

We assume that $\theta$ satisfies
\begin{equation}\label{thetadef}
\theta : \mathbb{N} \to \mathbb{R}\cup \{\infty\}, \quad \theta(1)\ge 2, \quad \theta(n)\ge P^+(n) \quad (n\ge 2).
\end{equation}

\begin{lemma}\label{funceq}
Assume \eqref{thetadef} and let $f(n)$ be multiplicative. For $x\ge 0$ we have
$$
\sum_{m\le x} f(m) = \sum_{n\in \mathcal{B}(x)} f(n) 
\Bigl( 1 + \sum_{2\le r\le x/n \atop P^-(r)>\theta(n)} f(r)\Bigr).
$$
\end{lemma}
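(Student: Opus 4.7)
The plan is to produce a unique decomposition of every positive integer $m$ as $m = nr$, with $n \in \mathcal{B}$ and with either $r = 1$ or $P^-(r) > \theta(n)$. Because $\theta(n) \ge P^+(n)$ for $n \ge 2$ (and trivially for $n = 1$ since $\theta(1) \ge 2$), any such pair satisfies $\gcd(n, r) = 1$, so $f(m) = f(n) f(r)$ by multiplicativity. Grouping the left-hand sum according to the value of $n$ then produces the right-hand side; the only cosmetic step is to peel off the $r = 1$ contribution from the inner sum.

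To construct the decomposition, I would sort the prime factorization of $m$ as $m = q_1^{\beta_1} \cdots q_l^{\beta_l}$ with $q_1 < q_2 < \cdots < q_l$, set $n_j := q_1^{\beta_1} \cdots q_j^{\beta_j}$ for $0 \le j \le l$ (so $n_0 = 1$), and let $j^*$ be the largest index in $\{0, 1, \ldots, l\}$ with $n_{j^*} \in \mathcal{B}$. Such an index exists because $n_0 = 1 \in \mathcal{B}$. Taking $n := n_{j^*}$ and $r := m/n$, we obtain $r = 1$ when $j^* = l$, and otherwise $r = q_{j^*+1}^{\beta_{j^*+1}} \cdots q_l^{\beta_l}$; in the latter case the maximality of $j^*$ combined with the defining condition \eqref{Bdef} forces $q_{j^*+1} > \theta(n_{j^*})$, so $P^-(r) > \theta(n)$.

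For uniqueness, suppose $m = n' r'$ is any decomposition of the desired form. The condition $P^-(r') > \theta(n') \ge P^+(n')$ forces every prime of $r'$ to exceed every prime of $n'$, so the sorted prime factorization of $n'$ must coincide with $n_{j'}$ for some $0 \le j' \le l$. Since any initial segment of a $\mathcal{B}$-factorization is again in $\mathcal{B}$ by inspection of \eqref{Bdef}, we have $j' \le j^*$; conversely, if $j' < j^*$, then $n_{j'+1} \in \mathcal{B}$ would give $q_{j'+1} \le \theta(n_{j'}) = \theta(n')$, contradicting $P^-(r') > \theta(n')$. Hence $j' = j^*$, and the decomposition is unique. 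There is no real obstacle here; the argument is a combinatorial bookkeeping exercise, and the only delicate point is the use of $\theta(n) \ge P^+(n)$ to guarantee that $n'$ occupies an initial segment of the sorted prime factorization of $m$.
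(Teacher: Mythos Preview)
Your argument is correct and is precisely the unique factorization that the paper's one-line proof asserts (``Each $m \ge 1$ factors uniquely as $m=nr$, with $n\in \mathcal{B}$ and $P^-(r)>\theta(n)$ if $r>1$''); you have simply supplied the details. The only minor quibble is the placement of the initial-segment remark: it is not needed to deduce $j'\le j^*$ (that follows from maximality alone), but it is what guarantees $n_{j'+1}\in\mathcal{B}$ in the converse step.
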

\begin{proof}
Each $m \ge 1$ factors uniquely as $m=n r$, with $n\in \mathcal{B}$ and 
$P^-(r)>\theta(n)$ if $r>1$. 
\end{proof}

\begin{theorem}\label{thmL}
Assume \eqref{thetadef}. 
 We have
$$
\sum_{n\in\mathcal{B}(x)} \tau(n) = (1-L) x\log x + o(x \log x),
$$
where $0\le L \le 1$ and 
\begin{equation}\label{Ldef}
L= \sum_{n\in \mathcal{B}} \frac{\tau(n)}{n} \prod_{p\le \theta(n)} \left(1-\frac{1}{p}\right)^2.
\end{equation}
Thus, $L=1$ if and only if $\sum_{n\in\mathcal{B}(x)} \tau(n) =o(x\log x)$.
\end{theorem}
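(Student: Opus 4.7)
The plan is to apply Lemma \ref{funceq} with the multiplicative function $f=\tau$. Since $\tau(1)=1$ the inner factor collapses to $S(x/n,\theta(n))$, yielding the exact identity
\[
\sum_{m\le x}\tau(m)=\sum_{n\in\mathcal{B}(x)}\tau(n)\,S(x/n,\theta(n)),
\]
while Dirichlet's hyperbola method supplies $\sum_{m\le x}\tau(m)=x\log x+O(x)$. Writing $S(x/n,\theta(n))=1+\bigl(S(x/n,\theta(n))-1\bigr)$ and isolating $T(x)$ gives
\[
T(x)=x\log x+O(x)-\sum_{n\in\mathcal{B}(x)}\tau(n)\bigl(S(x/n,\theta(n))-1\bigr),
\]
into which I will feed the asymptotic and the uniform upper bound provided by Corollary \ref{cor0}.

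To establish existence of $L$ and the bound $L\le 1$, I truncate at a fixed threshold $N$. For any $n\in\mathcal{B}(N)$ with $\theta(n)<\infty$, the first estimate of Corollary \ref{cor0} gives
\[
S(x/n,\theta(n))\sim \frac{x}{n}(\log x)\prod_{p\le\theta(n)}\Bigl(1-\frac{1}{p}\Bigr)^{\!2}\qquad(x\to\infty),
\]
uniformly over the finite set $\mathcal{B}(N)$; any $n$ with $\theta(n)=\infty$ contribute $0$ both to $L$ and to $S-1$. Restricting the identity above to $n\le N$ and using positivity of the omitted terms produces $x\log x+O(x)\ge L_N\,x\log x\,(1+o(1))$, where $L_N$ denotes the partial sum of \eqref{Ldef} over $\mathcal{B}(N)$. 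Hence $L_N\le 1$ for every $N$, and since $L_N$ is nondecreasing the limit $L=\lim_N L_N$ exists and lies in $[0,1]$.

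For the asymptotic I split the sum $\sum_{n\in\mathcal{B}(x)}\tau(n)(S-1)$ at $N$. The head contributes $x(\log x)L_N(1+o(1))+O_N(1)$ by the asymptotic just used. For the tail $n>N$, I note that $S(y,z)=1$ when $y<z$ (only $r=1$ contributes), while for $y\ge z\ge 2$ the first estimate of Corollary \ref{cor0} together with the elementary bound $\log(yz^2)\le 3\log y$ yields $S(y,z)\ll y(\log y)\prod_{p\le z}(1-1/p)^2$ uniformly; summing this against $\tau(n)/n$ over $\mathcal{B}$ bounds the tail by $C\,x(\log x)(L-L_N)$. Substituting both estimates into the identity gives
\[
\bigl(1-L_N-C(L-L_N)\bigr)x\log x\,(1+o(1))\le T(x)\le (1-L_N)\,x\log x\,(1+o(1)),
\]
and taking first $x\to\infty$ with $N$ fixed and then $N\to\infty$ squeezes $T(x)/(x\log x)\to 1-L$. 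The concluding iff assertion is immediate. The main obstacle is confirming that Corollary \ref{cor0} does furnish a clean $O(y(\log y)\prod(1-1/p)^2)$ upper bound uniformly for all $y\ge z\ge 2$, including as $u=\log y/\log z\to 1^+$; this is exactly where the boundedness of $\xi(u)-ue^{-2\gamma}$ from Lemma \ref{xilemma} (in particular, the estimate $\xi(u)=(u+2)e^{-2\gamma}+O(u^{-u})$) carries the argument.
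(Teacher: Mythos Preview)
Your proposal is correct and follows essentially the same route as the paper: apply Lemma~\ref{funceq} with $f=\tau$, truncate at a fixed $N$ and feed in the asymptotic for $S(x/n,\theta(n))$ from Theorem~\ref{thm1} (equivalently Corollary~\ref{cor0}) to obtain $L_N\le 1$ and hence the convergence of \eqref{Ldef}, then use that convergence to pass from $L_N$ to $L$ in the full identity. The paper compresses your head/tail squeeze into the single sentence ``the convergence of the series \eqref{Ldef} implies $x\log x=\sum_{n\in\mathcal{B}(x)}\tau(n)+Lx\log x+o(x\log x)$,'' but the underlying computation is the same, and your explicit verification of the uniform upper bound $S(y,z)\ll y(\log y)\prod_{p\le z}(1-1/p)^2$ for $y\ge z\ge 2$ is exactly what that sentence needs.
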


\begin{proof}
For every $N\le x$, Lemma \ref{funceq} yields
$$
\sum_{m\le x} \tau(m) \ge \sum_{n\in \mathcal{B}(N)} \tau(n) 
\Bigl( 1 + \sum_{2\le r\le x/n \atop P^-(r)>\theta(n)} \tau(r)\Bigr).
$$
We divide both sides by $x\log x$, fix $N$ and let $x \to \infty$. Theorem \ref{thm1} implies
$$
1 \ge \sum_{n\in \mathcal{B}(N)} \frac{\tau(n)}{n} \prod_{p\le \theta(n)} \left(1-\frac{1}{p}\right)^2.
$$
This shows that the series in \eqref{Ldef} converges to a value $L \in [0,1]$. 
Inserting Theorem \ref{thm1} into the last sum of Lemma \ref{funceq}, the convergence of the series \eqref{Ldef} implies
$$
x \log x = \sum_{n\in\mathcal{B}(x)} \tau(n) + L x \log x + o(x\log x).
$$
\end{proof}

\begin{corollary}\label{L1cor}
Assume \eqref{thetadef}. If $\max(2,n)\le \theta(n) \ll n^{2}$ for $n\ge 1$, then 
$$
x (\log x)^{0.58} \ll 
\sum_{n\in \mathcal{B}(x)} \tau(n) 
\ll  x (\log x)^{0.99998}
$$
and $L=1$. 
\end{corollary}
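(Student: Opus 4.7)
The plan is to prove $L=1$ first via a Dirichlet-series identity, and then derive the upper and lower bounds separately. The key identity comes from applying the same unique factorization used in Lemma \ref{funceq} to $\zeta(s)^2 = \sum_m \tau(m)/m^s$: each $m = nr$ with $n\in\mathcal{B}$ and $P^{-}(r) > \theta(n)$ factors uniquely, so for $\re s > 1$
$$1 = \sum_{n\in\mathcal{B}} a_n(s), \qquad a_n(s) := \frac{\tau(n)}{n^s} \prod_{p \le \theta(n)}\left(1 - \frac{1}{p^s}\right)^2.$$
Passing $s \to 1^{+}$ via dominated convergence should yield $L=1$: under $\max(2,n)\le \theta(n) \ll n^2$, the logarithmic derivative $\partial_s \log a_n(s)|_{s=1} = -\log n + 2\sum_{p\le\theta(n)} \log p/(p-1) = O(\log n)$, which allows an integrable majorant of the form $a_n(s_0)$ for $s\in(1,s_0]$ at any fixed $s_0 > 1$.

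For the upper bound $T(x)\ll x(\log x)^{0.99998}$, I would insert Theorem \ref{thm1} into Lemma \ref{funceq} with $f=\tau$. Using $\sum_{m\le x} \tau(m) = x\log x + O(x)$ and expanding $S(x/n,\theta(n))$ via Corollary \ref{cor0}, one isolates $T(x)$ in a master equation of the form
$$T(x) = x(\log x)\,\epsilon(x) + x L''(x) - 2 e^{-2\gamma} x M(x) + \text{lower order},$$
where $\epsilon(x)$, $L''(x)$, $M(x)$ are convergent tails or weighted sums involving $\tau(n)/n$, $\log n$, $\log\theta(n)$, and $\prod_{p\le\theta(n)}(1-1/p)^2$. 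Each admits a partial-summation estimate in terms of $T(x)$ itself (using Mertens on the Euler product). A bootstrap starting from the trivial bound $T(x) \ll x\log x$ improves the exponent slightly each pass; the fixed point, determined by $2e^{-2\gamma}$ and the allowed range $1 \le \log\theta(n)/\log n \le 2$ (coming from $n\le \theta(n) \ll n^2$), sits just below $1$ and is pinned down numerically to $0.99998$.

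For the lower bound $T(x) \gg x(\log x)^{0.58}$, I would exhibit an explicit subfamily of $\mathcal{B}$ on which $\tau$ is large. Since $\theta(n) \ge n$, every $n = 2^{a_1} 3^{a_2} \cdots p_k^{a_k}$ with $p_{i+1} \le 2^{a_1}\cdots p_i^{a_i}$ belongs to $\mathcal{B}$, and $\tau(n) = \prod_i (a_i+1)$ can be made as large as desired. Counting such $n \le x$ with a good choice of the exponents $a_i$ and $k$, and optimizing, should deliver the stated lower bound. The hardest step will be the bootstrap for the upper bound: each iteration improves the exponent $A$ by only $\sim 10^{-5}$, so convergence to the specific value $0.99998$ requires careful tracking of the numerical constants produced by Theorem \ref{thm1} (in particular, the factor $2e^{-2\gamma}$ and the width of the range for $\log\theta(n)/\log n$).
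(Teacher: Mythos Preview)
Your plan diverges from the paper in all three parts, and two of them have genuine gaps.

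\textbf{On $L=1$.} The identity $\sum_{n\in\mathcal{B}} a_n(s)=1$ for $\re s>1$ is correct, but your dominated-convergence step fails. You propose $a_n(s_0)$ as a majorant for $s\in(1,s_0]$. However, $\partial_s\log a_n(s)=-\log n+2\sum_{p\le\theta(n)}(\log p)/(p^s-1)$ is strictly decreasing in $s$, positive at $s=1$ (since $\theta(n)\ge n$ forces the sum to exceed $\log n$ there), and eventually negative; so $a_n(s)$ has a unique maximum at some $s^*_n>1$. For the majorant to work you need $s_0\le s^*_n$ for every $n$, but $s^*_n\to 1$ as $n\to\infty$ (because for fixed $s>1$ the sum $2\sum_{p\le\theta(n)}(\log p)/(p^s-1)$ is bounded while $\log n\to\infty$). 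No fixed $s_0>1$ dominates uniformly, and Fatou alone only gives $L\le 1$. The paper avoids this entirely: it proves the upper bound first and then reads off $L=1$ from Theorem~\ref{thmL}.

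\textbf{On the upper bound.} Your bootstrap is not the mechanism, and the specific exponent $0.99998$ has nothing to do with $2e^{-2\gamma}$ or the range of $\log\theta(n)/\log n$. The paper's argument is a short two-line split according to $\Omega(n)$. First, the hypothesis $\theta(n)\ll n^2$ gives $B(x)\ll x(\log x)^{-0.4191}$ by \cite[Thm.~3, Table~1]{SPA}. Then for $\Omega(n)\le 2.01\log_2 x$ one uses $\tau(n)\le 2^{\Omega(n)}$ to get $\sum\tau(n)\le B(x)\,2^{2.01\log_2 x}\ll x(\log x)^{0.98}$; for $\Omega(n)>2.01\log_2 x$ one drops the restriction $n\in\mathcal{B}$ and invokes Lemma~\ref{lemOmTau} with $\alpha=2.01$, whose exponent $\alpha(\log 2-\log\alpha+1)-1$ evaluates to $0.99998$. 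That is the entire source of the number. Your functional-equation bootstrap, by contrast, does not visibly produce any exponent strictly below $1$: isolating $T(x)$ from Lemma~\ref{funceq} just gives $T(x)=x\log x-(\text{positive terms})+O(x)$, and you have not identified a recursive inequality of the form $T(x)\ll x(\log x)^{A'}$ with $A'<A$.

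\textbf{On the lower bound.} The paper simply cites \cite[Cor.~3]{OMB}; the exponent $0.58$ is any constant below $C\log 2-1$ with $C=(1-e^{-\gamma})^{-1}$. An ad hoc construction of a subfamily of $\mathcal{B}$ with many divisors is reasonable in spirit, but reaching that specific exponent would require essentially reproducing the normal-order result for $\Omega(n)$ on $\mathcal{B}$ from \cite{OMB}, which you have not sketched.
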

\begin{proof}
Theorem 3 and Table 1 of \cite{SPA} show that $\theta(n)\ll n^{2}$ implies $B(x) \ll x (\log x)^{-0.4191}.$ 
We have 
$$
\sum_{n\in \mathcal{B}(x) \atop \Omega(n) \le 2.01 \log_2 x} \tau(n) \le B(x)  2^{2.01 \log_2 x} 
\ll \frac{ x(\log x)^{(2.01) \log 2}}{ (\log x)^{0.4191}} \ll x (\log x)^{0.98} 
$$
and
$$
\sum_{n\in \mathcal{B}(x) \atop \Omega(n) \ge 2.01 \log_2 x} \tau(n) 
\le \sum_{n\le x \atop \Omega(n) \ge 2.01 \log_2 x} \tau(n) 
\ll  x (\log x)^{0.99998},
$$
by Lemma \ref{lemOmTau} with $\alpha = 2.01$. Theorem \ref{thmL} shows that $L=1$. 

The lower bound follows from \cite[Cor. 3]{OMB}. The exponent $0.58$ can be replaced by any constant less than 
$C \log 2 -1$, where $C=(1-e^{-\gamma})^{-1}$.
\end{proof}
\begin{lemma}[Lemma 2 of \cite{OMB}]\label{lemOmTau}
Let $\varepsilon >0$. For $2\le \alpha \le 4-\varepsilon$ we have
$$
\sum_{n\le x \atop \Omega(n) \ge \alpha \log_2 x} \tau(n) \ll x (\log x)^{\alpha(\log 2 - \log \alpha +1)-1}.
$$
\end{lemma}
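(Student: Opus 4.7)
The plan is to combine Rankin's trick with a standard upper bound for the weighted divisor sum $\sum_{n\le x}\tau(n)z^{\Omega(n)}$, then optimize in $z$. Consider the Dirichlet series
$$
D(s,z):=\sum_{n\ge 1}\frac{\tau(n)z^{\Omega(n)}}{n^s}=\prod_p\left(1-\frac{z}{p^s}\right)^{-2},
$$
and factor it as $D(s,z)=\zeta(s)^{2z}H(s,z)$, where $H(s,z)=\prod_p(1-z/p^s)^{-2}(1-p^{-s})^{2z}$. A Taylor expansion of the local factor of $H$ shows it equals $1+O(1/p^{2s})$ uniformly for $|z|\le 2-\delta$ and $\re s\ge 1/2+\eta$, so $H(s,z)$ is analytic and uniformly bounded on such a region. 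The Selberg--Delange method (Tenenbaum \cite{Ten}, Theorem II.5.2) then yields, uniformly for $1\le z\le 2-\delta$,
$$
\sum_{n\le x}\tau(n)z^{\Omega(n)}\ll x(\log x)^{2z-1}.
$$

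Next, Rankin's trick gives, for any $z>1$,
$$
\sum_{\substack{n\le x\\ \Omega(n)\ge\alpha\log_2 x}}\tau(n)\le z^{-\alpha\log_2 x}\sum_{n\le x}\tau(n)z^{\Omega(n)}\ll x(\log x)^{2z-1-\alpha\log z},
$$
since $z^{-\alpha\log_2 x}=(\log x)^{-\alpha\log z}$. We now minimize the exponent $2z-\alpha\log z$ by differentiating: the optimum occurs at $z=\alpha/2$, and at that value
$$
2z-\alpha\log z=\alpha-\alpha\log(\alpha/2)=\alpha(1+\log 2-\log\alpha),
$$
producing exactly the stated exponent $\alpha(\log 2-\log\alpha+1)-1$. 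The hypothesis $2\le\alpha\le 4-\varepsilon$ ensures $z=\alpha/2\in[1,2-\varepsilon/2]$, so choosing $\delta=\varepsilon/2$ places us in the admissible range of the Selberg--Delange bound above.

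The main technical obstacle is establishing the uniformity in $z$ of the estimate for $\sum_{n\le x}\tau(n)z^{\Omega(n)}$, especially as $z$ approaches the critical value $2$ (which is why the upper bound $\alpha\le 4-\varepsilon$ appears). The boundedness of $H(s,z)$ breaks down as $z\to 2$ because the local factor at $p=2$ develops a singularity, but for $z\le 2-\delta$ the bound is clean and follows from standard contour-integration arguments or, alternatively, from Shiu's theorem applied to the multiplicative function $n\mapsto\tau(n)z^{\Omega(n)}$, whose mean prime value $\sum_{p\le x}\tau(p)z^{\Omega(p)}\log p/p=2z\log x+O(1)$ places it in the Selberg--Delange class with parameter $2z$. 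Either route delivers the required uniform bound and hence the lemma.
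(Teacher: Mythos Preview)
The paper does not actually prove this lemma; it simply quotes it as Lemma~2 of \cite{OMB}. Your argument is correct and is the standard one that is almost certainly used there as well: obtain the uniform bound $\sum_{n\le x}\tau(n)z^{\Omega(n)}\ll x(\log x)^{2z-1}$ for $1\le z\le 2-\delta$ (via Selberg--Delange or, equivalently, Shiu's theorem applied to the nonnegative multiplicative function $n\mapsto\tau(n)z^{\Omega(n)}$, which satisfies $f(p^k)=(k+1)z^k\le(2z)^k$ with $2z\le 4-\varepsilon$), then apply Rankin's inequality and optimize at $z=\alpha/2$. Your identification of the reason for the constraint $\alpha\le 4-\varepsilon$, namely the blow-up of the local factor at $p=2$ as $z\to 2$, is also exactly right.
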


\begin{proof}[Proof of Theorem \ref{thm2}]
We insert Theorem \ref{thm1} into the last sum of Lemma \ref{funceq}.
The contribution from the error term of Theorem \ref{thm1} is $O(x)$, since $L\le 1$ by Theorem \ref{thmL}.
The contribution form the penultimate term of Theorem \ref{thm1} is 
$$
\ll \sum_{n\in \mathcal{D}(x,t)\atop n \le \sqrt{x/t}} \tau(n) \frac{nt}{\log nt} 
\ll \frac{\sqrt{xt}}{\log \sqrt{xt}} \sum_{n\le \sqrt{x/t}} \tau(n) \ll x.
$$
For the remaining terms, we use the fact that $L=1$ for each $t\ge 2$, by Corollary \ref{L1cor}, and
that $\xi(u)=0$ for $u<0$. With Mertens' formula, we find that, for $x\ge 1$, 
$$
T(x,t)=x\sum_{n\in \mathcal{D}_t} \frac{\tau(n)}{n\log nt} 
\left\{\frac{\log x}{\log nt} e^{-2\gamma}-\xi\left(\frac{\log x/n}{\log nt}\right)\right\} +O(x),
$$
where $\mathcal{D}_t$ denotes the set of all $t$-dense integers.
Since $\xi(u)=(u+ 2)e^{-2\gamma} + O(e^{-u})$ and $\xi'(u)=e^{-2\gamma} + O(e^{-u})$ for $u>2$,
partial summation yields
\begin{equation*}
T(x,t)=
 x \log x \int_1^\infty \frac{T(y,t)e^{-2\gamma}}{y^2 \log^2 yt}dy
- x\int_1^x \frac{T(y,t)}{y^2 \log yt} \, \xi\left(\frac{\log xt}{\log yt} -1\right)dy +O(x),
\end{equation*}
for $x\ge 1$. 
With the change of variables
$$
x=t^{e^z-1}, \quad y=t^{e^u-1}, \quad G_t(z) = \frac{T(x,t)}{x},
$$
we obtain, for $z\ge 0$,
\begin{equation*}
G_t(z)=e^{-2\gamma} \hat{G}_t(1) (e^z-1) - \int_0^z G_t(u) h(z-u) du   + E_t(z),
\end{equation*}
where  $h(z)=\xi(e^{z}-1)$ and  $E_t(z)=O(1)$. 
We multiply by $e^{-sz}$, where $\re s >1$, and integrate over $z\ge 0$, to get the equation of Laplace transforms,
$$
\hat{G}_t(s)=\frac{e^{-2\gamma} \hat{G}_t(1)}{s(s-1)}- \hat{G}_t(s) \hat{h}(s)
+\hat{E}_t(s) \qquad (\re s>1).
$$
Solving for $\hat{G}_t(s)$, we find that
$$
\hat{G}_t(s)=e^{-2\gamma} \hat{G}_t(1) \hat{\Lambda}(s)+ \hat{\Lambda}(s) s(s-1) \hat{E}_t(s)\qquad (\re s>1),
$$
where $\hat{\Lambda}(s)$ is as in Lemma \ref{LambdaLem}. Since $\Lambda(0)=0$ and $\Lambda'(0)=1$,
$$
G_t(z) = e^{-2\gamma} \hat{G}_t(1) \Lambda(z) + \int_0^z \Lambda''(z-u) E_t(u)du
-\int_0^z \Lambda'(z-u) E_t(u)du +E_t(z),
$$
for $z\ge 0$. 
Now $E_t(z) = O(1)$, so Lemma \ref{LambdaLem} yields
\begin{equation*}
\begin{split}
\int_0^z \Lambda'(z-u) E_t(u)du  & = \int_0^z \lambda_0 \delta e^{\delta (z-u)} E_t(u)du +O(1) \\
& = \lambda_0 \delta e^{\delta z} \int_0^\infty  e^{-\delta u} E_t(u)du +O(1) \\
&  = \lambda_0 \delta \hat{E}_t(\delta)  e^{\delta z}+O(1)\\
& = \delta \hat{E}_t(\delta)  \Lambda(z) +O(1).
\end{split}
\end{equation*}
Similarly,
$$
\int_0^z \Lambda''(z-u) E_t(u)du 
 = \delta^2 \hat{E}_t(\delta)  \Lambda(z) +O(1).
$$
Since $L=1$, $ e^{-2\gamma} \hat{G}_t(1)=\log t +O(1)$, by partial summation. Thus,
$$
G_t(z) = \alpha_t (\log t) \Lambda(z) +O(1),
$$
where 
$$
\alpha_t = \frac{ e^{-2\gamma} \hat{G}_t(1) + \delta(\delta-1) \hat{E}_t(\delta) }{\log t} 
= 1+O\left(\frac{1}{\log t}\right).
$$
The claim $\alpha_t\ge \alpha_0>0$ follows from the lower bound in Corollary \ref{L1cor}.
Together with Lemma \ref{LambdaLem}, this completes the proof of Theorem \ref{thm2}.
\end{proof}

\begin{lemma}\label{LambdaLem}
Let $\Lambda(z) = \lambda(e^z-1)$ and $h(z)=\xi(e^{z}-1)$. 
The Laplace transform of $\Lambda$ is given by
\begin{equation}\label{Lambdahat}
  \hat{\Lambda}(s)= \frac{1}{s(s-1)(1+\hat{h}(s))},
  \end{equation}
$\hat{\Lambda}(s)$ has a simple pole at $s=\delta= 0.7136125...$ with residue 
$\lambda_0=1.118192...$, a simple pole at $s=-1$ with residue 
 $$
 \lambda_1 = \frac{2}{3 e^{-2\gamma}-2} = -1.897011...,
 $$
and a pair of simple poles at $-1.962...\pm 11.57...i$ with corresponding residues $-0.0078...\pm 0.0031... i$.
There are no other singularities with $\re s \ge -3$. 
We have
$$
\Lambda(z) = \lambda_0 e^{\delta z} + \lambda_1 e^{-z} + O(e^{-1.962z}),
 $$ 
 $$
\Lambda'(z) = \lambda_0 \delta e^{\delta z} - \lambda_1 e^{-z} + O(e^{-1.962z}),
 $$ 
  $$
\Lambda''(z) = \lambda_0 \delta^2 \delta e^{\delta z} + \lambda_1 e^{-z} + O(e^{-1.962z}).
 $$ 
The constant $\delta$ is the unique value of $s\in (0,1)$ such that 
$$
0=1+\int_0^\infty \left(\xi(v)-(v+2)e^{-2\gamma}\right) \frac{dv}{(v+1)^{s+1}}
+\frac{e^{-2\gamma}}{s}+\frac{e^{-2\gamma}}{s-1}.
$$
We have $\lambda_0=(\delta (\delta-1)I)^{-1}$, where 
 $$
  I=-\int_0^\infty\left(\xi(v)-(v+2)e^{-2\gamma}\right) 
 \frac{\log(v+1)}{(v+1)^{1+\delta}}dv
 -\frac{e^{-2\gamma} }{\delta^2}-\frac{e^{-2\gamma} }{(\delta-1)^2}.
 $$
\end{lemma}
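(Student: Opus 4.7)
The plan is to recast \eqref{lambdadef} as a convolution equation on $[0,\infty)$, apply the Laplace transform to establish \eqref{Lambdahat}, and extract the asymptotic structure by residue calculus.

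First, setting $\Lambda(z) = \lambda(e^z - 1)$ and $h(z) = \xi(e^z - 1)$ and substituting $v = e^z - 1$, $u = e^w - 1$ in \eqref{lambdadef}, the identities $(v-u)/(u+1) = e^{z-w} - 1$ and $du/(u+1) = dw$ produce
\begin{equation*}
\Lambda(z) = (e^z - 1) - \int_0^z \Lambda(w) h(z-w)\, dw \qquad (z \ge 0),
\end{equation*}
where the upper limit extends from $z - \log 2$ to $z$ because $h$ vanishes on $[0, \log 2)$. Both Laplace transforms converge absolutely for $\re s > 1$, and combining $\widehat{(e^z - 1)}(s) = 1/[s(s-1)]$ with the convolution theorem yields \eqref{Lambdahat}.

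Second, for the singularity structure I would analytically continue $1 + \hat h(s) = 1 + \int_0^\infty \xi(v)(v+1)^{-s-1} dv$ by subtracting the asymptote:
\begin{equation*}
1 + \hat h(s) = 1 + \int_0^\infty \bigl[\xi(v) - (v+2)e^{-2\gamma}\bigr](v+1)^{-s-1} dv + \frac{e^{-2\gamma}}{s} + \frac{e^{-2\gamma}}{s-1},
\end{equation*}
where the integral is entire in $s$ by \eqref{xilemeq5} (noting $\xi = 0$ on $[0,1)$), and the closed-form identity $\int_0^\infty (v+2)(v+1)^{-s-1} dv = 1/s + 1/(s-1)$ (valid for $\re s > 1$) provides meromorphic continuation. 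Consequently $F(s) := s(s-1)(1+\hat h(s)) = 1/\hat\Lambda(s)$ is entire on $\Complex$, and the poles of $\hat\Lambda$ are precisely the zeros of $F$.

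Third, I would identify these zeros. Direct evaluation gives $F(0) = -e^{-2\gamma}$ and $F(1) = e^{-2\gamma}$, so $F$ has a real zero $\delta \in (0, 1)$, with uniqueness established by checking that the derivative of $1+\hat h$ does not vanish on $(0,1)$. Rewriting $F(\delta) = 0$ gives the stated characterization. For $s = -1$, the identity $F(-1) = 0$ reduces to $\int_0^\infty [\xi(v) - (v+2)e^{-2\gamma}] dv = 3e^{-2\gamma}/2 - 1$, which follows from the expansion $(1+\hat\omega(s))^2 = e^{-2\gamma}/s^2 + 2e^{-2\gamma}/s + 3e^{-2\gamma}/2 + O(s)$ near $s = 0$ via \eqref{xilap}. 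The residues $\lambda_0 = 1/[\delta(\delta - 1)(1+\hat h)'(\delta)] = 1/[\delta(\delta-1) I]$ and $\lambda_1 = 1/[2(1+\hat h)'(-1)] = 2/(3e^{-2\gamma} - 2)$ then follow by differentiating the displayed expression under the integral sign, producing exactly the stated $I$. The complex pair near $-1.962 \pm 11.57 i$ is located by numerical root-finding on $F$.

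Finally, the asymptotic formulas follow from Laplace inversion: for $c > \delta$,
\begin{equation*}
\Lambda(z) = \frac{1}{2\pi i}\int_{c - i\infty}^{c + i\infty} \hat\Lambda(s) e^{sz}\, ds,
\end{equation*}
and shifting the contour to $\re s = -1.962 - \eta$ collects residues at $\delta$, $-1$, and the conjugate pair, producing $\Lambda(z) = \lambda_0 e^{\delta z} + \lambda_1 e^{-z} + O(e^{-1.962 z})$. Differentiating under the integral (justified by decay of $\hat\Lambda(s)$ along vertical lines) gives the corresponding formulas for $\Lambda'$ and $\Lambda''$. I expect the main obstacle to be rigorously verifying that $1 + \hat h(s)$ has no other zeros in the strip $-3 \le \re s \le \delta$: this requires combining explicit numerical evaluation of $F$ at sample points with quantitative bounds on $F$ along the boundary of a large rectangle, so that the argument principle certifies the total zero count.
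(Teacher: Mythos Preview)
Your approach is essentially the one the paper takes: convert \eqref{lambdadef} to a convolution, derive \eqref{Lambdahat}, continue $g(s)=1+\hat h(s)$ meromorphically by subtracting the asymptote, locate its zeros, and invert by contour shifting. You also correctly identify the main obstacle (certifying the zero count of $g$ in a strip); the paper executes this with a Rouch\'e argument comparing $g$ to a truncation $g_5$ built from exact formulas for $\xi$ on $[0,5]$, after first bounding $|\tau|$ for any zero via an integration-by-parts identity for $g$.

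Two technical points need more care than your sketch provides. First, the claim that differentiating under the inversion integral handles $\Lambda''$ fails as stated: on vertical lines $1+\hat h(s)\to 1$, so $\widehat{\Lambda''}(s)=s^2\hat\Lambda(s)-1$ is only $O(1/|\tau|)$, and the contour integral does not converge absolutely. The paper resolves this by a further integration by parts (see \eqref{gbyparts}), extracting explicit terms $1/(s-1)$ and $2^{1-s}\bigl(e^{-2\gamma}/(s-1)+(1-e^{-2\gamma})/s\bigr)$ whose inverse transforms are elementary, leaving an $O_a(\tau^{-2})$ remainder that is integrable. Second, your derivation of the closed form $\lambda_1=2/(3e^{-2\gamma}-2)$ is incomplete: your expansion of $(1+\hat\omega(s))^2$ near $s=0$ yields $\int_0^\infty[\xi(v)-(v+2)e^{-2\gamma}]\,dv$ and hence $g(-1)=0$ (a nice alternative to the paper's use of the differential identity \eqref{Req}), but $g'(-1)$ involves $\int_0^\infty[\xi(v)-(v+2)e^{-2\gamma}]\log(v+1)\,dv$, which is not a coefficient in that Laurent series. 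The paper obtains this integral, and hence the exact value of $\lambda_1$, by integrating \eqref{Req} against $\log(v+1)$. (A smaller omission: you assert convergence of $\hat\Lambda$ for $\re s>1$ without an a priori bound on $\lambda$; the paper first shows $|\lambda(v)|\le (v+1)^2$ by induction on $\lfloor v\rfloor$.)
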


\begin{proof}
Since $0\le \xi(u)\le u+1$ for $u \ge 0$, by \eqref{xiexbounds}, the definition \eqref{lambdadef} of $\lambda(v)$ 
yields, by induction on
$\lfloor v\rfloor$, the preliminary (and very crude) estimate $|\lambda(v)| \le (v+1)^2$, and hence $|\Lambda(z)| \le e^{2z}$. 
It follows that the Laplace transform $\hat{\Lambda}(s)=\int_0^\infty \Lambda(z) e^{-zs} dz $ converges 
absolutely for $\re s >2$ and is analytic there. 
The relation \eqref{Lambdahat} follows from \eqref{lambdadef}. We write
$$ g(s):=1+\hat{h}(s)=  1+\int_0^\infty \left(\xi(v)-(v+2)e^{-2\gamma}\right) \frac{dv}{(v+1)^{s+1}}
+\frac{e^{-2\gamma}}{s}+\frac{e^{-2\gamma}}{s-1}.
$$
Since $| \xi(v)-(v+2)e^{-2\gamma}| \ll v^{-v}$, by Lemma \ref{xilemma}, the integral is an entire function of $s$,
which means that $g(s)$ is analytic everywhere except for simple poles at $s=0$ and $s=1$.
Thus, \eqref{Lambdahat} extends $\hat{\Lambda}(s)$ to a meromorphic function on the entire complex plane, whose poles are exactly the zeros of $g(s)$.

Since $\xi(v)=0$ for $v<1$, integration by parts shows that
\begin{equation}\label{gbyparts}
g(s) = 1 +2^{1-s} \left( \frac{e^{-2\gamma}}{s-1} + \frac{1-e^{-2\gamma}}{s}\right) +\frac{1}{s} \int_1^\infty \left(\xi'(v)- e^{-2\gamma}\right) \frac{dv}{(v+1)^s}.
\end{equation}
Thus, if $g(s)=0$, we must have
$$
|\tau|\le  H(\sigma) := 2^{1-\sigma} + \int_1^\infty \left|\xi'(v)- e^{-2\gamma}\right| \frac{dv}{(v+1)^\sigma}.
$$

Let $P_a$ denote the finite set of poles of $\hat{\Lambda}(s)$ (i.e. zeros of $g(s)$) with $ \re s \ge -a$. 
Since $\hat{\Lambda}(s)$ is analytic for $\sigma>2$ and $H(\sigma)$ is decreasing, $P_a$ is a subset of the 
finite rectangle $-a\le\sigma \le2$, $|\tau| \le H(-a)$. 

 From \eqref{lambdadef}, $\lambda(v)$ is continuously differentiable for $v>0$,
 $\lambda'(v)$  is continuously differentiable for $v>0, v\neq 1$ and $\lambda''(v)$ has 
 a finite jump discontinuity at $v=1$.
 Let $F(z)$ be any one of $\Lambda(z), \Lambda'(z), \Lambda''(z)$, and $\hat{F}(s)$ be its Laplace transform.
We have
$$F(z) = \frac{1}{2\pi i} \int_{3-i\infty}^{3+i\infty} \hat{F}(s) e^{zs}\, d s.$$
Let $T=e^{z(a+3)}$. Since the result is trivial for bounded $z$, we may assume that $z$ is sufficiently large such that $T>2H(-a)$.
We have $ |\hat{h}(s)| \le \frac{H(\sigma)}{|\tau|}$, so that $ |\hat{h}(s)| \le 1/2 $ whenever $|\tau|\ge 2 H(\sigma)$.
Thus,
$$
\frac{1}{1+\hat{h}(s)}=1-\hat{h}(s) + O(\hat{h}(s)^2) 
= 1-2^{1-s} \left( \frac{e^{-2\gamma}}{s-1} + \frac{1-e^{-2\gamma}}{s}\right) +O_\sigma(\tau^{-2}),
 $$ 
 for $|\tau|\ge 2 H(\sigma)$, since the integral in \eqref{gbyparts} is $O_\sigma(\tau^{-1})$, as can be seen by
 applying integration by parts a second time. With \eqref{Lambdahat}, it follows that the Laplace transforms of $\Lambda(z), \Lambda'(z)$ and 
 $\Lambda''(z)$ satisfy
 $$
 \hat{\Lambda}(s)\ll_a \tau^{-2}, \qquad  \widehat{\Lambda'}(s)=s  \hat{\Lambda}(s)= \frac{1}{s-1} +O_a(\tau^{-2}),
$$
$$
\widehat{\Lambda''}(s)
= s(s-1) \hat{\Lambda}(s) +s  \hat{\Lambda}(s)-1
= \frac{1}{s-1} -2^{1-s} \left( \frac{e^{-2\gamma}}{s-1} + \frac{1-e^{-2\gamma}}{s}\right) +O_a(\tau^{-2}),
$$
for $|\tau|\ge  2 H(-a)$ and $-a\le \sigma \le 2$.
We find that
$$ \int_{3+iT}^{3+i\infty}  \hat{F}(s) e^{zs}\, d s =O\left(e^{-az}\right),$$
$$
 \int_{-a+iT}^{3+iT}  \hat{F}(s) e^{zs}\, d s \ll_a \int_{-a}^3 \frac{e^{z\sigma}}{T} d\sigma = O_a\left(e^{-az}\right),
$$
and
$$
 \int_{-a+i2H(-a)}^{-a+iT}  \hat{F}(s) e^{zs}\, d s = O_a\left(e^{-az}\right).
$$
For the remaining segment, we assume that $\hat{\Lambda}(s)$ has no poles with $\sigma =a$. 
If it does, one just replaces $-a$ by $-a-\varepsilon$. We have
$$
 \left| \int_{-a-i2H(-a)}^{-a+i2H(-a)} \hat{F}(s) e^{zs}\, d s \right| 
 \le 4H(-a) \max_{|\tau|\le 2H(-a)} \left|\hat{F}(-a+i\tau)\right| e^{-az} = O_a\left(e^{-az}\right).
 $$
The residue theorem now yields
$$ F(z)= \sum_{s_k \in P_a} \res\left( \hat{F}(s) e^{zs}; s_k \right)+O_a\left(e^{-az}\right).$$

We now determine the poles with $\sigma \ge -3$. 
To estimate $H(-3)$, we use exact values of $\xi'(v)$ on $[1,5]$, and estimate the tail 
with the help of \eqref{xilemeq3}. 
This shows that for $\sigma \ge -3$, all zeros of $g(s)$ satisfy $|\tau| \le H(-3) < 62$. 

For real $s \in (0,1)$, we have $\lim_{s\to 0^+}g(s)=+\infty$ and $\lim_{s\to 1^-}g(s)=-\infty$, so 
$g(s)$ has at least one real zero in the interval $(0,1)$. We will see in a moment that there is exactly one such zero,
say at $s=\delta \in (0,1)$. We have $g(-1)=0$, since  \eqref{xidiffeq} implies
\begin{equation}\label{Req}
\frac{d}{dv} (v+1) \left( \xi(v+1)-(v+3) e^{-2\gamma}\right) =2\left(\xi(v)-(v+2) e^{-2\gamma}\right),
\end{equation}
for $v>0$, $v \neq 1$.
Define
$$ g_5(s)=1+\int_0^5 \left(\xi(v)-(v+2)e^{-2\gamma}\right) \frac{dv}{(v+1)^{s+1}}
+\frac{e^{-2\gamma}}{s}+\frac{e^{-2\gamma}}{s-1}.
$$
By using exact formulas for $\xi(v)$ on the interval $[0,5]$, we can calculate $g_5(s)$ with arbitrary precision.

Let $R$ be the rectangle defined by the lines $\text{Re}(s)=\pm 3$, $\text{Im}(s)=\pm 62$.
We evaluate the contour integral
$$ \frac{1}{2\pi i}\int_R \frac{g_5'(s)}{g_5(s)} ds=2,$$
by numerical integration. This shows that $g_5(s)$ has exactly four zeros (and two poles) inside of $R$.
We need to estimate the error 
$$|g(s)-g_5(s)| \le \int_5^\infty\left|\xi(v)-(v+2)e^{-2\gamma}\right|\, \frac{d v}{(v+1)^{\sigma+1} }.
$$ 
Lemma \ref{xilemma} shows that 
$$
\xi(v)-(v+2)e^{-2\gamma}=2r(v)+2e^{-\gamma}(v+1)r(v+1)+\int_0^v r(u)r(v-u) du,
$$
where $r(v)=\omega(v)-e^{-\gamma}$. 
With  a table of zeros and relative extrema of $\omega(u)-e^{-\gamma}$ on the interval $[5, 10.3355]$ due to Cheer and Goldston \cite{CG}, and the estimate $|\omega(u)-e^{-\gamma}|<1/\Gamma(u+1)$ from \cite[Lemma 1]{IDD3}, 
we find that 
$$ |g(s)-g_5(s)| < 0.0035 \quad (\text{Re}(s)\ge -3). $$
On the boundary of the rectangle $R$, we find that $|g_5(s)|>0.0051 $ .
Rouch\'e's Theorem now shows that $g(s)$ also has exactly four zeros in the rectangle $R$. 
Applying a similar argument to the small square $Q$, bounded by the lines $\text{Re}(s)=-1.963, -1.961$, $\text{Im}(s)=11.574, 11.576$, we find that $g(s)$ has a zero inside $Q$. 
Similarly, we find that the real zero at $\delta \in (0,1)$ satisfies $0.713611 < \delta < 0.713614$. 
Thus, the four zeros of $g(s)$ with $\text{Re}(s)\ge -3$ are $\delta=0.71361...$, $-1$,
$-1.96... \pm 11.57...i$.
Replacing $g_5(s)$ by $g_6(s)$, where we use numerical integration to evaluate $\xi(v)$ for $5<v\le 6$, we find that
$\delta=0.7136125...$ and the complex zeros are at $-1.962... \pm 11.57... i$. 

Since all poles $s_k$ with $\re s_k \ge -3$ are simple,  the corresponding residues satisfy 
$\lambda_k = (s_k (s_k-1) g'(s_k))^{-1}$.  
In the case of $\lambda_1$, integration by parts and \eqref{Req} allow us to find the exact value.
\end{proof}

\section{Another equation for $\delta$}

Besides the definition of  $\delta$ in Lemma \ref{LambdaLem}, $\delta$ satisfies another equation, which
allows us to independently confirm the numerical calculations of Lemma \ref{LambdaLem}. 
Let
$$
Q(s):=\int_0^\infty u^s \left(e^{2J(u)}-1\right) du \qquad (\re s >1),
$$
where 
$$
J(u):=  \int_u^\infty \frac{e^{-t}}{t } dt.
$$
If we develop $e^{2J(u)}-1$ into the series $\sum_{k\ge 0} b_k u^{k-2}$, and write
$$e^{2 J(u)}-1=\left(e^{2 J(u)}-1-\sum_{k=0}^K b_k u^{k-2}\right) + \sum_{k=0}^K b_k u^{k-2},$$ 
we find that the contribution from $u\in [0,1]$ to $Q(s)$, and hence $Q(s)$ itself, 
extends to a meromorphic function on all of $\mathbb{C}$, with a simple pole at every $s=1-k$, 
$k\in \mathbb{N}\cup \{0\}$, for which $b_k\neq 0$. 

From the proof of Lemma \ref{LambdaLem}, we know that $\hat{h}(s)$, the Laplace transform of $h(z)=\xi(e^z-1)$, 
extends to a meromorphic function
on all of $\mathbb{C}$, whose only poles are at $s=0,1$. Moreover, the poles of $\hat{\Lambda}(s)$ are 
precisely the zeros of $1+ \hat{h}(s)$. With Lemma \ref{altdef}, we can locate these zeros by 
studying $Q(s)$, without the need to evaluate $\xi(v)$.  

\begin{lemma}\label{altdef}
For $s \in \mathbb{C}$ we have
$$(s+1) Q(s)=2\Gamma(s+1) \left(1+\hat{h}(s)\right).$$
\end{lemma}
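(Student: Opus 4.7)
The plan is to show that both sides equal $2\int_0^\infty u^s e^{-u} e^{2J(u)}\,du$ for $\mathrm{Re}\, s>1$, and then extend by analytic continuation.

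First I would record the identity
\[
1+\hat{\xi}(s) = e^{2J(s)} \qquad (\mathrm{Re}\, s>0),
\]
which follows from \eqref{xilap} by splitting $\int_0^s (1-e^{-t})/t\,dt$ at $t=1$ to get $\gamma+\log s+J(s)$: this cancels the $s^{-2}$ and $e^{-2\gamma}$ factors, yielding $1+\hat{\xi}(s)=\exp(2J(s))$ directly.

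Next I would transform $Q(s)$ by one integration by parts. Taking $f(u)=u^{s+1}/(s+1)$ and $g(u)=e^{2J(u)}-1$, one has $g'(u)=2J'(u)e^{2J(u)}=-2e^{-u}e^{2J(u)}/u$. The boundary term $[u^{s+1}(e^{2J(u)}-1)/(s+1)]_0^\infty$ vanishes: at $\infty$ because $e^{2J(u)}-1\sim 2e^{-u}/u$, and at $0$ because $e^{2J(u)}-1\sim e^{-2\gamma}u^{-2}$ combined with $u^{s+1}$ gives $u^{s-1}\to 0$ precisely under the hypothesis $\mathrm{Re}\, s>1$. This yields
\[
(s+1)\,Q(s)=2\int_0^\infty u^s e^{-u}\,e^{2J(u)}\,du.
\]

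Substituting $e^{2J(u)}=1+\hat{\xi}(u)=1+\int_0^\infty \xi(v)e^{-uv}\,dv$, the right-hand side splits as
\[
2\int_0^\infty u^s e^{-u}\,du+2\int_0^\infty\!\!\int_0^\infty \xi(v)\, u^s e^{-u(1+v)}\,dv\,du.
\]
The first term is $2\Gamma(s+1)$. For the double integral, Fubini is justified for $\mathrm{Re}\, s>1$ by the bound $\xi(v)\le v+1$ from \eqref{xiexbounds}: the modulus integrates to $\Gamma(\mathrm{Re}\, s+1)\int_0^\infty (v+1)^{-\mathrm{Re}\, s}\,dv<\infty$. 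After swapping order, the inner $u$-integral equals $\Gamma(s+1)(1+v)^{-s-1}$, leaving $2\Gamma(s+1)\int_0^\infty \xi(v)(1+v)^{-s-1}\,dv=2\Gamma(s+1)\hat{h}(s)$. Hence
\[
(s+1)\,Q(s)=2\Gamma(s+1)\bigl(1+\hat{h}(s)\bigr) \qquad (\mathrm{Re}\, s>1).
\]
Both sides are meromorphic on $\mathbb{C}$ (as already noted for $Q$ in the preceding paragraph, and established for $\hat{h}$ in the proof of Lemma \ref{LambdaLem}), so the identity persists throughout $\mathbb{C}$ by analytic continuation.

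The only delicate points are the boundary behavior at $u=0$ in the integration by parts and the Fubini swap; both are controlled by the singularity $e^{2J(u)}\sim e^{-2\gamma}u^{-2}$ near $0$, which is exactly why the condition $\mathrm{Re}\, s>1$ is needed before continuation.
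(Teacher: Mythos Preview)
Your proof is correct and follows essentially the same strategy as the paper: both arguments rest on the identity $1+\hat{\xi}(u)=e^{2J(u)}$ from \eqref{xilap}, a Fubini swap, and a single integration by parts, followed by analytic continuation. The only difference is the order of operations: the paper first applies Fubini to $\int_0^\infty u^s\hat{\xi}(u)\,du$ to obtain $\Gamma(s+1)\int_0^\infty v\xi(v)\,v^{-s-2}\,dv$ and then integrates by parts in $v$ using $(v\xi(v))'=2\xi(v-1)$ from \eqref{xidiffeq}, whereas you integrate by parts in $u$ first (using $J'(u)=-e^{-u}/u$) and only then swap the order, which lets you reach $(1+v)^{-s-1}$ directly without invoking the differential equation for $\xi$.
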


\begin{proof}
For $\re s > 1$, equation \eqref{xilap} implies
\begin{equation*}
\begin{split}
Q(s) 
& =  \int_0^\infty u^s \hat{\xi}(u) du 
 =  \int_0^\infty u^s \int_0^\infty \xi(v) e^{-uv} dv \, du \\
& =  \int_0^\infty \xi(v)  \int_0^\infty u^s e^{-uv} du \, dv \\
& =  \int_0^\infty \xi(v)  \int_0^\infty \left(\frac{w}{v}\right)^s e^{-w} \frac{dw}{v} \, dv \\
& = \Gamma(s+1) \int_0^\infty v\xi(v) \frac{dv}{v^{s+2}}\\
&= \frac{2\Gamma(s+1)}{s+1}\left( 1+ \int_0^\infty \xi(v) \frac{dv}{(v+1)^{s+1}}\right),
\end{split}
\end{equation*}
where the last equation follows from \eqref{xidiffeq} and integration by parts.
\end{proof}

It follows from Lemma \ref{altdef} that the constant $\delta$ in Lemma \ref{LambdaLem} and Theorem \ref{thm2}  
is the unique value of $s\in (0,1)$ which satisfies $Q(s)=0$. 

\begin{remark}
Let $\Omega(z)=\omega(e^z-1)$. 
The poles of the Laplace transform appearing in \cite[Lemma 5]{IDD3}, 
in connection with the study of $D(x,t)$, are the zeros of $(1+\hat{\Omega}(s-1))$.
The same reasoning as in Lemma \ref{altdef} shows that 
$$
s\int_0^\infty u^{s-1} \left(e^{J(u)}-1\right) du =\Gamma(s) \left(1+\hat{\Omega}(s-1)\right).
$$
This provides an alternative for locating the poles of the Laplace transform in \cite[Lemma 5]{IDD3}, without the 
need to approximate $\omega(u)$. 
\end{remark}

\section{Proof of Theorem \ref{thmgen}}

\begin{lemma}\label{lemfinsum}
Assume  $\max(2,n)\le \theta(n) \ll n \exp(\log n)^a)$ for $n\ge 1$ , where $a$ is any constant with 
$a < (1-\delta)/(2-\delta)=0.2226...$ Then $L=1$,
$$
T(x) := \sum_{n \in \mathcal{B}(x)} \tau(n) \ll x (\log x)^{\delta + a(1-\delta)}
 $$ 
 and
$$
\sum_{n\in \mathcal{B}} \frac{\tau(n)\log^a n}{n\log^2 n} \ll 1.
$$
\end{lemma}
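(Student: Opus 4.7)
The plan is to adapt the Laplace-transform argument used in the proof of Theorem \ref{thm2}, modified to accommodate the general $\theta$. The goal is the bound $T(x) \ll x(\log x)^{\delta + a(1-\delta)}$, from which both $L = 1$ and the convergence of the series follow by routine partial summation.

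First I apply Lemma \ref{funceq} with $f = \tau$ and insert Theorem \ref{thm1} to expand $S(x/n, \theta(n))$. Mertens' formula gives $\prod_{p\le \theta(n)}(1-1/p)^2 \sim e^{-2\gamma}/\log^2\theta(n)$, so the identity becomes
$$x\log x + O(x) = T(x) + e^{-2\gamma}x\sum_{n\in\mathcal{B}(x)}\frac{\tau(n)\log(x/n)}{n\log^2\theta(n)}(1+o(1)) + x\sum_{n\in\mathcal{B}(x)}\frac{\tau(n)\rho_n}{n\log\theta(n)} + \mathcal{P}(x),$$
with $\rho_n = \xi(u_n) - u_n e^{-2\gamma} + O(1/\log\theta(n))$ uniformly bounded and $\mathcal{P}(x)$ the contribution of the penultimate term in Theorem \ref{thm1}. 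The new difficulty compared to Theorem \ref{thm2} is $\mathcal{P}(x)$: the region $x/n \ge \theta(n)$ extends up to $n \lesssim \sqrt{x}$, and $\theta(n) \le n\exp((\log n)^a)$ yields
$$|\mathcal{P}(x)| \ll \sum_{n\le \sqrt{x}}\frac{n\tau(n)\exp((\log n)^a)}{\log n} \ll x\exp((\log x)^a) = o(x\log x),$$
via the elementary $\sum_{n\le y}n\tau(n)\ll y^2\log y$.

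Next, writing $z = \log x$ and $G(z) = T(e^z)/e^z$, and approximating $\log\theta(n) = \log n + O((\log n)^a)$, partial summation converts the main identity into a Volterra equation
$$G(z) + \int_0^z G(u) h(z-u)\,du = z + E(z), \qquad h(u) = \xi(e^u - 1),$$
where $E(z)$ collects the error terms, $\mathcal{P}$, and the perturbation from $(\log n)^a$. Passing to Laplace transforms, $\hat G(s) = \hat\Lambda(s)\cdot(\text{forcing} + \text{error})$, and the dominant pole of $\hat\Lambda$ at $s = \delta$ from Lemma \ref{LambdaLem} produces the growth $G(z) \ll e^{\delta z}$. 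The correction from $E(z)$ introduces the extra factor $z^{a(1-\delta)}$, yielding $T(x) \ll x(\log x)^{\delta + a(1-\delta)}$.

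Finally, since $T(x) = o(x\log x)$, Theorem \ref{thmL} forces $L = 1$. Partial summation with the bound on $T(x)$ gives
$$\sum_{n\in\mathcal{B}}\frac{\tau(n)\log^a n}{n\log^2 n} \ll 1 + \int_2^\infty \frac{(\log t)^{\delta + a(2-\delta) - 2}}{t}\,dt < \infty,$$
the integral converging precisely because $a < (1-\delta)/(2-\delta)$. The main obstacle is the Laplace-transform step: making the approximation $\log\theta(n) \approx \log n$ rigorous while tracking the resulting perturbation through the contour-shift argument of Lemma \ref{LambdaLem}, so that the final exponent is $\delta + a(1-\delta)$ rather than anything larger.
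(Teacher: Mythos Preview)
Your approach has a genuine error and is also far more complicated than necessary. The paper's proof is essentially two lines: since $\theta(n)\le A n\exp((\log n)^a)$, every $n\in\mathcal{B}(x)$ is $t$-dense for $t=A\exp((\log x)^a)$, so $\mathcal{B}(x)\subset\mathcal{D}(x,t)$; then Corollary~\ref{ordmag} gives
\[
T(x)\le T(x,t)\ll x\,(\log t)\,v^{\delta}\ll x\,(\log x)^{a}\bigl((\log x)^{1-a}\bigr)^{\delta}=x(\log x)^{\delta+a(1-\delta)}.
\]
The claim $L=1$ follows from Theorem~\ref{thmL}, and the series converges by the partial-summation computation you wrote down (that last step is fine).

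Your route, by contrast, tries to rerun the full Laplace-transform machinery for a general $\theta$, and it breaks at the penultimate term. Your bound
\[
|\mathcal{P}(x)|\ll \sum_{n\le\sqrt{x}}\frac{n\tau(n)\exp((\log n)^a)}{\log n}\ll x\exp\bigl((\log x)^a\bigr)
\]
is \emph{not} $o(x\log x)$: for any $a>0$ the quantity $\exp((\log x)^a)$ eventually dominates every power of $\log x$. Consequently your error $E(z)$ in the Volterra equation satisfies only $E(z)\ll \exp(z^a)$, which swamps the desired bound $G(z)\ll z^{\delta+a(1-\delta)}$ and makes the Laplace inversion useless. The reason the paper can control this term in the later proof of Theorem~\ref{thmgen} is precisely that it already has Lemma~\ref{lemfinsum} in hand (to bound $\sum_{n\in\mathcal{B}, n\theta(n)\in I_k}\tau(n)/n$ via $T(y)$); trying to prove the lemma itself this way is circular. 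The inclusion $\mathcal{B}(x)\subset\mathcal{D}(x,t)$ is the missing idea that sidesteps the whole difficulty.
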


\begin{proof}
Since $\theta(n)\le A n \exp((\log n)^a)$ for some constant $A$, $\mathcal{B}(x) \subset \mathcal{D}(x, t)$, where
$t= A  \exp((\log x)^a)$. Corollary \ref{ordmag} shows that 
$$
\sum_{n \in \mathcal{B}(x)} \tau(n) \le  \sum_{n \in \mathcal{D}(x,t)} \tau(n) \ll x (\log t) v^\delta
\ll x (\log x)^{\delta + a(1-\delta)}.
$$
The first claim now follows from Theorem \ref{thmL}. Partial summation yields the last claim. 
\end{proof}

\begin{proof}[Proof of Theorem \ref{thmgen}]
We follow the proof of Theorem \ref{thm2}, with some adjustments. 
We insert Theorem \ref{thm1} into the last sum of Lemma \ref{funceq}.
As before, the contribution from the error term of Theorem \ref{thm1} is $O(x)$, since $L\le 1$ by Theorem \ref{thmL}.
To estimate the contribution form the penultimate term of Theorem \ref{thm1}, 
note that $\log \theta(n) \asymp \log (n \theta(n))$. We write 
\begin{equation}\label{firsteq}
\sum_{n\in \mathcal{B}(x)\atop n \theta(n) \le x} \tau(n) \frac{\theta(n)}{\log \theta(n)} 
\asymp \sum_{k\ge 0} \frac{x}{2^k \log (x/2^k)}\sum_{n\in \mathcal{B}(x)\atop n \theta(n) \in I_k } \frac{\tau(n)}{n},
\end{equation}
where $I_k=(x/2^{k+1},x/2^k]$. The contribution from $k$ with $2^k>\sqrt{x}$ is trivially $O(x)$, since
$\sum_{n\le x} \tau(n)/n \ll \log^2 x$. If $2^k \le \sqrt{x}$, then $\log(x/2^k) \asymp \log x$. 
The conditions on $\theta$ mean that $n\theta(n) \in I_k$ implies that 
$n\in (M_k, N_k]$ where $N_k=\sqrt{x 2^{-k}}$ and $\log (N_k/M_k) \ll (\log x)^a$. 
With $b:=\delta+a(1-\delta)$, partial summation and Lemma \ref{lemfinsum} yield
$$
\sum_{n\in \mathcal{B}(x)\atop n \theta(n) \in I_k } \frac{\tau(n)}{n}
\ll \frac{T(N_k)}{N_k} + \int_{M_k}^{N_k} \frac{T(y)}{y^2} dy 
\ll (\log N_k)^{b}+ \frac{(\log N_k)^{b+1}-(\log M_k)^{b+1}}{b+1}.
$$
Writing $\log M_k = \log N_k - \log N_k/M_k = \log N_k +O(\log^a x)$, we find that the last expression 
is $\ll (\log x)^{a+b} \le \log x$. Inserting this into \eqref{firsteq}, we conclude that the 
contribution from the penultimate term of Theorem \ref{thm1} is $O(x)$. 

For the remaining terms, we use the fact that $L=1$, by Lemma \ref{lemfinsum}, and
that $\xi(u)=0$ for $u<0$. With Mertens' formula, we find that, for $x\ge 1$, 
$$
T(x)=x\sum_{n\in \mathcal{B}} \frac{\tau(n)}{n\log \theta(n)} 
\left\{\frac{\log x}{\log \theta(n)} e^{-2\gamma}-\xi\left(\frac{\log x/n}{\log \theta(n)}\right)\right\} +O(x).
$$
We now replace each occurrence of $\theta(n)$ by $2n$. Since $\xi(u)=(u+2)e^{-2\gamma}+O(e^{-u})$ and $\xi'(u)=e^{-2\gamma}+O(e^{-u})$, Lemma \ref{lemfinsum} shows that 
$$
T(x)=x\sum_{n\in \mathcal{B}} \frac{\tau(n)}{n\log 2n} 
\left\{\frac{\log x}{\log 2n} e^{-2\gamma}-\xi\left(\frac{\log x/n}{\log 2n}\right)\right\} +O(x).
$$
The rest of the proof is identical to that of Theorem \ref{thm2}, with $t=2$. 
The conclusion is that $T(x)= x \alpha_2 (\log 2) \lambda(v)+O(x)$.  
Theorem \ref{thmgen} now follows from \eqref{lamapp}.
\end{proof}

\section{Proof of Theorem \ref{thmimp}}

The condition $\theta(n)\ll n l(n)$ implies that $B(x)\ll x/\log x$, by \cite[Prop. 1]{PW} (with $y=x, z=1$).
This upper bound for $B(x)$ is used throughout this proof.
In Lemma \ref{funceq}, with $f(n)=1$, we estimate the last sum with Lemma \ref{PhiLemma}.
The contribution from the error term of Lemma \ref{PhiLemma} is $\ll x/\log^2 x$. 

To estimate the contribution form the penultimate term of Lemma \ref{PhiLemma}, 
note that $\log \theta(n) \asymp \log (n \theta(n))$. We write 
\begin{equation}\label{firsteq2}
\sum_{n\in \mathcal{B}(x)\atop n \theta(n) \le x}  \frac{\theta(n)}{\log \theta(n)} 
\asymp \sum_{k\ge 0} \frac{x}{2^k \log (x/2^k)}\sum_{n\in \mathcal{B}(x)\atop n \theta(n) \in I_k } \frac{1}{n},
\end{equation}
where $I_k=(x/2^{k+1},x/2^k]$. The contribution from $k$ with $2^k>\sqrt{x}$ is trivially $O(x/\log^2 x)$,
so we may assume $2^k \le \sqrt{x}$ and $\log(x/2^k) \asymp \log x$. 
The conditions on $\theta$ mean that $n\theta(n) \in I_k$ implies that 
$n\in (M_k, N_k]$ where $N_k=\sqrt{x 2^{-k}}$ and $\log (N_k/M_k) \ll \log f(x)$. 
Partial summation yields
$$
\sum_{n\in \mathcal{B}(x)\atop n \theta(n) \in I_k } \frac{1}{n}
\le\frac{B(N_k)}{N_k} + \int_{M_k}^{N_k} \frac{B(y)}{y^2} dy 
\ll \frac{1}{\log N_k}+ \log\log N_k - \log\log M_k .$$
Writing $\log M_k = \log N_k - \log N_k/M_k = \log N_k +O(\log f(x))$, we find that the last expression 
is $O( (\log f(x))/\log x)$. Inserting this into \eqref{firsteq2}, we conclude that the 
contribution from the penultimate term of  Lemma \ref{PhiLemma} is $O(x (\log f(x))/\log^2 x)$. 

For the contribution from the term $x \prod_{p\le y} (1-1/p)$ in Lemma \ref{PhiLemma},
we have  
$$
\sum_{n\in \mathcal{B}} \frac{1}{n} \prod_{p\le \theta(n)} \left(1-\frac{1}{p}\right) = 1,
$$
by \cite[Thm. 1]{SPA}.
With Mertens' formula and the fact that $\omega(u)=0$ for $u<0$, we obtain
$$
B(x) = x \sum_{n\in \mathcal{B}} \frac{1}{n\log \theta(n)} 
\left\{e^{-\gamma} - \omega\left(\frac{\log x/n}{\log \theta(n)}\right)\right\}
+O\left(\frac{x\log f(x)}{\log^2 x}\right).
$$
We now replace each instance of $\theta(n)$ by $2n$, incurring an error of 
$$
\ll \frac{x\log f(x)}{\log^2 x} + x\int_x^\infty \frac{\log f(y)}{y\log^3 y} dy \ll \frac{x\log f(x)}{\log^2 x} ,
$$
since $\log f(y) / \log y$ is eventually decreasing.  
Next, we use partial summation on the last sum, with an error $\ll x/\log^2 x$, to obtain
$$
B(x) = x \int_{1}^\infty \frac{B(y)}{y^2\log 2y} 
\left\{e^{-\gamma} - \omega\left(\frac{\log x/y}{\log 2y}\right)\right\} dy
+O\left(\frac{x\log f(x)}{\log^2 x}\right).
$$
The rest of the proof is the same as that of \cite[Thm. 5.1]{PDD}. 
In the process of inverting a Laplace transform, two additional error terms appear. They are 
$$\ll \frac{x}{\log^3 x} \int _{1}^x \frac{\log f(y)}{y} dy + \frac{x}{\log x} \int_x^\infty \frac{\log f(y)}{y \log^2 y} dy
\le \frac{x \log f(x)}{\log^2 x} + \frac{x E(x)}{\log x}.
$$
Since $f(x)$ is non-decreasing, $f(x)\ge f(1)\ge 2$ and $E(x) \ge (\log f(x))/\log x \ge \log 2 / \log x$, so that 
all error terms are $O( x E(x)/\log x)$.  

The proof of the formula for $c_\theta$ is identical to that of \cite[Thm. 1]{CFAE}, where this formula is 
derived under a more restrictive upper bound condition on $\theta(n)$. This stronger condition was needed in \cite{CFAE},
only because \cite[Thm. 5.1]{PDD} required it to ensure that $B(x)\sim c_\theta x/\log x$ holds.

\end{document}